\documentclass[12pt]{article}
\usepackage{color}
\usepackage{graphicx}
\usepackage{subcaption} 
\usepackage{algorithm}
\usepackage{algpseudocode} 
\usepackage{mathtools} 
\usepackage{makecell}
\usepackage{booktabs}

\usepackage{setspace}
\usepackage{amssymb,amsmath,acronym,amsfonts}
\usepackage[square, comma, sort&compress, numbers]{natbib}
\usepackage{enumerate}
\usepackage{epstopdf}

\newtheorem{corollary}{Corollary}[section]
\newtheorem{theorem}{Theorem}[section]
\newtheorem{lemma}{Lemma}[section]
\newtheorem{definition}{Definition}[section]
\newtheorem{proposition}{Proposition}[section]
\newtheorem{example}{Example}[section]
\newtheorem{assum}{Assumption}[section]
\newtheorem{algo}{Algorithm}[section]
\newtheorem{Remark}{Remark}[section]

\numberwithin{equation}{section}
\def\x{{\bf x}}
\def\y{{\bf y}}

\def\w{{\bf w}}

\def\bc{\begin{corl}}
\def\bc{\end{corl}}
\def\ba{\begin{algo}}
\def\ea{\end{algo}}
\def\br{\begin{Remark}}
\def\er{\end{Remark}}
\def\bs{\begin{assum}}
\def\es{\end{assum}}
\def\bt{\begin{theorem}}
\def\et{\end{theorem}\vskip 3pt}
\def\bl{\begin{lemma}}
\def\el{\end{lemma}}
\def\ep{\end{proposition}}
\def\bp{\begin{proposition}}
\def\qed{\hfill{$\Box$}\vskip 5pt}
\def\be{\begin{example}}
\def\ee{\end{example}}
\def\bd{\begin{definition}}
\def\ed{\end{definition}}
\def\bc{\begin{corollary}}
\def\ec{\end{corollary}}
\def\proof{\noindent\it Proof. \hspace{1mm}\rm}
\input epsf
\begin{document}
\title{\bf $\mathrm M$-Eigenpairs of Partially Symmetric Tensors: Exact Reformulation and Perturbation Bounds}
\date{}
\author{Zhuolin Du, Hanxin Liu, Yisheng Song\thanks{Corresponding author E-mail: yisheng.song@cqnu.edu.cn}\\
	School of Mathematical Sciences,  Chongqing Normal University, \\
	Chongqing, 401331, P.R. China. \\ Email: duzhuolin728@163.com (Du); 2531417503@qq.com (Liu); \\ yisheng.song@cqnu.edu.cn (Song)}
\maketitle
\vspace{-0.6cm}
\begin{abstract}
In this paper, we consider the computation of $M$-eigenpairs of fourth order partially symmetric tensors arising from elasticity theory. First, a lifted fourth order tensor is constructed, and the original $M$-eigenvalue problem is reformulated as a parameterized
generalized tensor eigenvalue problem under the $\mathbf B_{\alpha,\beta}$-normalization.
Then, an exact correspondence between the two eigenvalue problems is established, which provides a procedure for computing all real $M$-eigenpairs through the proposed reformulation.
Furthermore, perturbation bounds for the largest $M$-eigenvalue are derived, and the lifted reformulation is shown to preserve these bounds without introducing any additional relaxation. Finally, numerical experiments are reported to show the effectiveness of the proposed method.


\smallskip

\noindent{\bf Keywords:} Partially symmetric tensors; $M$-eigenpairs; Generalized tensor eigenvalue problem; Tensor lifting; Perturbation bounds

\noindent{\bf AMS Subject Classification(2010):} 

\end{abstract}

\newpage
\section{Introduction}
Computing eigenvalues and eigenvectors of higher-order tensors has become a fundamental topic in numerical multilinear algebra \cite{Qi05,KB09,Zhao23,PA24} and polynomial optimization \cite{Lim05,QC18,ZL24}. Various notions of tensor eigenvalues have been introduced and extensively studied, including $H$-, $E$-, $Z$-, $C$-, $D$-, and $M$-eigenvalues, as well as generalized tensor eigenvalues formulations \cite{QC18}. These notions play an important role in areas such as elasticity theory \cite{QDH09,Zhao23}, quantum entanglement \cite{WQZ09, LLL19}, and hypergraph theory \cite{CCQ16}, thereby motivating extensive research on the theoretical analysis and numerical computation of tensor eigenpairs.

Among these notions, $M$-eigenvalues provide a fundamental tool for studying fourth order partially symmetric tensors. Such tensors arise naturally in continuum mechanics as elasticity tensors \cite{QDH09,Zhao23,LCL22}. Their spectral properties are closely related to the analysis of definiteness, strong ellipticity, and material stability in nonlinear elasticity. Consequently, the analysis and computation of $M$-eigenpairs have attracted sustained attention in recent years \cite{WQZ09,WCW23,ZL24}.

\bd\label{def1.1}\cite{WQZ09} Let $\mathcal A=(a_{ijkl})\in \mathbb R^{m\times n\times m\times n}$ be a fourth order tensor. If
\[
a_{ijkl}=a_{kjil}=a_{ilkj}=a_{klij}
\]
for all $i,k\in[m]$ and $j,l\in[n]$, where $[m] = \{1, 2, \dots, m\}$, then $\mathcal A$ is called a fourth order partially symmetric tensor.
\ed

For simplicity, let $\mathbb{P}^{m\times n\times m\times n}$ denote the set of all real fourth order partially symmetric tensors.

\bd\label{def1.2}
Let $\mathcal{A}=(a_{ijkl})\in\mathbb{P}^{m\times n\times m\times n}$. An $M$-eigenpair of $\mathcal{A}$ is a triple $(\lambda,\x,\y)\in\mathbb{R}\times(\mathbb{R}^m\setminus\{\mathbf{0}\})\times(\mathbb{R}^n\setminus\{\mathbf{0}\})$ satisfying
\begin{equation}\label{1.1}
\mathcal{A}\cdot \y\x\y=\lambda\x,\qquad
\mathcal{A}\x\y\x\cdot=\lambda\y,\qquad
\x^\top\x=1,\qquad
\y^\top\y=1,
\end{equation}
where
 $$(\mathcal{A}\cdot\y\x\y)_i=\sum_{\substack{k\in [m]; ~j, l \in [n]}}a_{ijkl}y_j x_ky_l
\quad \mbox{and}\quad(\mathcal{A}\x\y\x\cdot)_l = \sum_{\substack{i,k\in [m];~j \in [n]}} a_{ijkl} x_i y_j x_k,$$
then $\lambda$ is called an $M$-eigenvalue of $\mathcal{A}$, and $\x$ and $\y$ are called the associated left and right $M$-eigenvectors, respectively.
\ed

The tensor
$\mathcal A\in\mathbb P^{m\times n\times m\times n}$ induces the biquadratic form
$$
\mathcal A\mathbf x\mathbf y\mathbf x\mathbf y
:=
\sum_{\substack{i,k\in[m]\\ j,l\in[n]}}
a_{ijkl}x_i y_j x_k y_l
=
\mathbf x^\top
(\mathcal A\cdot\mathbf y\mathbf x\mathbf y).
$$
For later use, denote the largest and smallest $M$-eigenvalues
of $\mathcal A$, respectively, by
\begin{equation}\label{1.2}
\lambda_{\max}^{M}(\mathcal A)
:=\max_{\substack{\mathbf x^\top\mathbf x=1\\
                 \mathbf y^\top\mathbf y=1}}
\mathcal A\mathbf x\mathbf y\mathbf x\mathbf y,
\qquad
\lambda_{\min}^{M}(\mathcal A):=
\min_{\substack{\mathbf x^\top\mathbf x=1\\
                 \mathbf y^\top\mathbf y=1}}
\mathcal A\mathbf x\mathbf y\mathbf x\mathbf y.
\end{equation}

Despite their importance, the computation of $M$-eigenvalues remains challenging. As shown in Definition \ref{def1.2}, an $M$-eigenpair is characterized by a coupled nonlinear system involving two vector variables subject to two independent spherical constraints. Moreover, the variational characterizations in \eqref{1.2} show that computing the extremal $M$-eigenvalues amounts to solving biquadratic optimization problems over a product of unit spheres. These problems are generally nonconvex and computationally difficult. Therefore, existing studies have mainly focused on inclusion bounds, sufficient conditions for strong ellipticity, and numerical methods for extremal $M$-eigenvalues.

A considerable amount of work has been devoted to $M$-eigenvalues of fourth order partially symmetric tensors, especially in connection with strong ellipticity. Since strong ellipticity is equivalent to the positivity of the smallest $M$-eigenvalue, many studies have focused on constructing $M$-eigenvalue inclusion intervals and computable bounds. For instance, Li et al.\ \cite{LLL19} derived $M$-eigenvalue inclusion intervals for fourth order partially symmetric tensors, while Che et al.\ \cite{CCW20} established bounds for $M$-eigenvalues and the $M$-spectral radius of nonnegative fourth order partially symmetric tensors. Related advances include refined localization sets and $S$-type inclusion theorems; see, for example, \cite{CCW20,CCX20,CCZ21,HLW20,HLX21,HXL20,LCL22,LL21}. These results provide valuable criteria for estimating $M$-eigenvalues. However, since most of these conditions are sufficient rather than necessary, they cannot completely characterize strong ellipticity. This motivates the development of numerical approaches for directly computing $M$-eigenpairs.

Alongside these theoretical developments, several numerical methods have been proposed for computing extremal $M$-eigenvalues. The largest $M$-eigenvalue can be formulated as a biquadratic optimization problem over unit spheres, and Wang et al.\ \cite{WQZ09} used this formulation to develop a practical power type method. Building on related optimization viewpoints, later studies proposed alternating and gradient type methods for extremal $M$-eigenvalue problems. For instance, an alternating shifted power method was developed for fourth order partially symmetric tensors \cite{WCW23}, while Chen et al.\ \cite{CH22} introduced an efficient alternating minimization method for a class of fourth degree polynomial optimization problems applicable to the smallest $M$-eigenvalue. Du et al.\ \cite{DW24} further proposed a generalized inverse power method for the smallest $V$-singular value of partially symmetric tensors, which reduces to the smallest $M$-eigenvalue when $(p,q)=(2,2)$. More recently, Du et al.\ \cite{DS26} proposed a memory gradient method for computing extreme $M$-eigenvalues of fourth order partially symmetric tensors. These developments have substantially advanced the computation of extremal spectral quantities. Nevertheless, most existing methods focus on the largest, smallest, or other extremal $M$-eigenvalues, whereas the computation of the complete set of real $M$-eigenpairs has received much less attention.

Beyond the computation of extremal $M$-eigenvalues, Zhao \cite{Zhao23} recently proposed a lifting strategy for computing all $M$-eigenpairs of partially symmetric tensors. By constructing an associated lifting tensor, the $M$-eigenvalue problem was transformed into a standard $Z$-eigenvalue problem, and an explicit correspondence between $M$-eigenpairs and $Z$-eigenpairs was established. Consequently, all real $M$-eigenpairs can be recovered through the lifted tensor formulation. However, this approach relies on the standard Euclidean normalization of $Z$-eigenvalues. A natural question is whether a more general normalization framework can be introduced while preserving the exact correspondence and spectral information.

Motivated by the above observations, this paper develops a parameterized lifting framework for computing $M$-eigenpairs of fourth order partially symmetric tensors. By introducing the weighted normalization matrix $\mathbf B_{\alpha,\beta}$, the original $M$-eigenvalue problem is reformulated as a symmetric generalized tensor eigenvalue problem. The main contributions of this paper are summarized as follows:

(i) A lifted tensor construction with a parameterized $\mathbf B_{\alpha,\beta}$-normalization is proposed, which extends the standard $Z$-eigenvalue reformulation.

(ii) An exact correspondence between $M$-eigenpairs of the original tensor and $\mathbf B_{\alpha,\beta}$-eigenpairs of the lifted tensor is established, together with an explicit recovery procedure for all real $M$-eigenpairs.

(iii) Perturbation bounds for the largest $M$-eigenvalue are derived in both the original and lifted formulations. It is shown that the lifting transformation preserves the perturbation estimates up to a scaling factor.

The remainder of this paper is organized as follows. Section 2 presents the lifted tensor construction and establishes the correspondence between $M$-eigenpairs and $\mathbf B_{\alpha,\beta}$-eigenpairs. Section 3 develops perturbation bounds for the largest $M$-eigenvalue and analyzes their preservation under the lifting transformation. Numerical experiments are reported in Section 4, and conclusions are drawn in Section 5.

\setcounter{equation}{0}
\section{A parameterized lifted tensor eigenvalue reformulation}
In this section, we reformulate the $M$-eigenvalue problem for a fourth order partially symmetric tensor as a generalized tensor eigenvalue problem for an associated symmetric lifted tensor. The reformulation is based on a parameterized weighted normalization matrix $\mathbf B_{\alpha,\beta}$.

For positive integers $p$ and $N$, let
\[
\mathbb R^{[p,N]}
:=
\mathbb R^{\overbrace{N\times\cdots\times N}^{p\ {\rm times}}}
\]
denote the space of real $p$th-order $N$-dimensional tensors. We first recall the definitions of $Z$-eigenpairs and mode-$k$ $\mathcal B$-eigenpairs needed below.

\bd\label{defZ}\cite{Qi05}
Let $\mathcal A\in\mathbb R^{[p,N]}$ be a symmetric tensor. A pair $(\lambda,\x)\in
\mathbb R\times(\mathbb R^N\setminus\{\mathbf0\})$ is called a $Z$-eigenpair of $\mathcal A$ if
\[
\mathcal A\x^{p-1}=\lambda\x,
\qquad
\x^\top\x=1 .
\]
The scalar $\lambda$ is called a $Z$-eigenvalue and $\x$ is called the associated $Z$-eigenvector.
\ed

For a tensor \(\mathcal A=(a_{i_1\cdots i_m})\in\mathbb R^{[p,N]}\) and a vector \(\x\in\mathbb R^N\), denote by \(\mathcal A^{(k)}\x^{p-1}\in\mathbb R^N\) the vector whose \(i_k\)-th component is
\[
(\mathcal A^{(k)}\x^{p-1})_{i_k}
=
\sum_{i_1,\ldots,i_{k-1},i_{k+1},\ldots,i_p=1}^{N}
a_{i_1\cdots i_p}
x_{i_1}\cdots x_{i_{k-1}}x_{i_{k+1}}\cdots x_{i_p}.
\]

\bd \label{df2.1}\cite{CHZ16}
Let $\mathcal{A} \in \mathbb{R}^{[p,N]}$ and $\mathcal{B} \in \mathbb{R}^{[q,N]}$.
Assume that the homogeneous polynomial $\mathcal{B}\x^{q}$ is not identically zero. For $1 \leq k \leq p$, if there exist $\lambda \in \mathbb{R}$ and $\x \in \mathbb{R}^N\setminus\{\bf 0\}$ such that

(i) when $p = q$,
\[
\mathcal{A}^{(k)} \x^{p-1} = \lambda \mathcal{B} \x^{p-1},
\]

(ii) when $p \neq q$,
\[
\mathcal{A}^{(k)} \x^{p-1} = \lambda \mathcal{B} \x^{q-1}, \quad \mathcal{B} \x^{q} = 1,
\]
then $\lambda$ is called a mode-$k$ $\mathcal{B}$-eigenvalue of $\mathcal{A}$, $\x$ is called a mode-$k$ $\mathcal{B}$-eigenvector associated with $\lambda$, and $(\lambda,\x)$ is called a mode-$k$ $\mathcal{B}$-eigenpair of $\mathcal{A}$.


\ed

With this generalized definition in hand, we specialize the generalized eigenvalue framework of Definition~\ref{df2.1} to the $M$-eigenpair problem associated with $\mathcal A=(a_{ijkl})\in
\mathbb P^{m\times n\times m\times n}$. The key idea is to embed the two coupled equations in in \eqref{1.1} into a single  mode-$1$ tensor equation by introducing an lifted tensor.

The first step is to construct an associated lifted tensor in $\mathbb{R}^{[4,m+n]}$.
Define $\mathcal{B}_{\mathcal{A}} = (b_{t_1t_2t_3t_4})\in \mathbb{R}^{[4,m+n]}$ by
\begin{equation}\label{2.1}
b_{t_1 t_2 t_3t_4}=
\begin{cases}
a_{t_1t_2-mt_3t_4-m}, & \text{if } t_1,t_3\in [m],\ t_2,t_4\in m+[n],\\
a_{t_2t_3-mt_4t_1-m}, & \text{if } t_1,t_3\in m+[n],\ t_2,t_4\in [m],\\
0, & \text{otherwise},
\end{cases}
\end{equation}
where $m+[n] = \{m+1, m+2, \dots, m+n\}$. Let $\w=(\x^\top,\y^\top)^\top \in \mathbb{R}^{m+n}$, then
\begin{equation}\label{2.2}
\mathcal{B}_{\mathcal{A}} \w^3=
\begin{pmatrix}
\mathcal{A}\cdot\y\x\y \\
\mathcal{A}\x\y\x\cdot
\end{pmatrix}.
\end{equation}
Indeed, if $t_1\in[m]$, then
$$
\begin{aligned}
(\mathcal{B}_{\mathcal{A}}\w^3)_{t_1} &= \sum_{t_2,t_3,t_4 \in [m+n]} b_{t_1t_2t_3t_4} w_{t_2} w_{t_3}w_{t_4} \\
&= \sum_{t_3 \in [m],\, t_2,t_4 \in m+[n]} a_{t_1t_2-mt_3t_4-m} w_{t_2} w_{t_3}w_{t_4}
\end{aligned}
$$
$$
\begin{aligned}
&= \sum_{t_3 \in [m],\, t_2-m,t_4-m\in [n]} a_{t_1t_2-mt_3t_4-m} y_{t_2-m} x_{t_3}y_{t_4-m} \\
&= \sum_{k \in [m], j,l\in [n]} a_{t_1jkl} y_jx_ky_l=(\mathcal{A}\cdot\y\x\y)_{t_1}.
\end{aligned}
$$
Similarly, if $t_1 \in m+[n]$, then
$$
\begin{aligned}
(\mathcal{B}_{\mathcal{A}}\w^3)_{t_1} &= \sum_{t_2,t_3,t_4 \in [m+n]} b_{t_1t_2t_3t_4} w_{t_2}w_{t_3}w_{t_4}\\
&= \sum_{t_2,t_4 \in [m],\, t_3\in m+[n]} a_{t_2t_3-mt_4t_1-m} w_{t_2} w_{t_3} w_{t_4}\\
&= \sum_{t_2,t_4 \in [m],\, t_3-m\in [n]} a_{t_2t_3-mt_4t_1-m}x_{t_2} y_{t_3-m}x_{t_4} \\
&= \sum_{i,k \in [m], \,j\in [n]} a_{ijkt_1-m} x_i y_jx_k = (\mathcal{A}\x\y\x\cdot)_{t_1-m}.
\end{aligned}
$$
Consequently, \eqref{2.2} holds.

In addition, by \eqref{2.1} and the fact that $\mathcal A$ is partially symmetric, the lifted tensor $\mathcal B_{\mathcal A}$ satisfies
$$
b_{t_1 t_2 t_3 t_4} = b_{t_3 t_2 t_1 t_4} = b_{t_1 t_4 t_3 t_2} = b_{t_3 t_4 t_1 t_2}.
$$

We next symmetrize $\mathcal B_{\mathcal A}$. Define  $\bar{\mathcal B}_{\mathcal A}=(\bar b_{t_1t_2t_3t_4})$ by
\begin{equation}\label{2.3}
\bar{b}_{t_1 t_2 t_3 t_4} = \frac{1}{6} \left(b_{t_1 t_2 t_3 t_4} + b_{t_1 t_3 t_2 t_4} + b_{t_1 t_2 t_4 t_3}+b_{t_2t_1t_3t_4}+b_{t_2t_1t_4t_3}+b_{t_3t_1t_4t_2}\right).
\end{equation}
By construction, $\bar{\mathcal B}_{\mathcal A}$ satisfies all permutations of its indices and hence is symmetric.

Then, for each $t_1\in[m+n]$,
$$
\begin{aligned}
(\bar{\mathcal{B}_\mathcal{A}} \w^3)_{t_1} &= \sum_{t_2,t_3,t_4 \in [m+n]} \bar{b}_{t_1 t_2 t_3 t_4} w_{i_2} w_{i_3} w_{i_4} \\
&= \frac{1}{6} \sum_{t_2,t_3,t_4 \in [m+n]} \left(b_{t_1 t_2 t_3 t_4} + b_{t_1 t_3 t_2 t_4} + b_{t_1 t_2 t_4 t_3}+b_{t_2t_1t_3t_4}+b_{t_2t_1t_4t_3}+b_{t_3t_1t_4t_2}\right) w_{t_2} w_{t_3} w_{t_4} \\
&= (\mathcal{B}_\mathcal{A} \w^3)_{t_1}.
\end{aligned}
$$
Similarly,
$$
\begin{aligned}
\bar{\mathcal{B}_\mathcal{A}} \w^4 &= \sum_{t_1,\dots,t_4 \in [m+n]} \bar{b}_{t_1 t_2 t_3 t_4} w_{t_1} w_{t_2} w_{t_3} w_{t_4} \\
&= \frac{1}{6} \sum_{t_1,\dots,t_4 \in [m+n]} \left(b_{t_1 t_2 t_3 t_4} + b_{t_1 t_3 t_2 t_4} + b_{t_1 t_2 t_4 t_3}+b_{t_2t_1t_3t_4}+b_{t_2t_1t_4t_3}+b_{t_3t_1t_4t_2}\right) w_{t_1} w_{t_2} w_{t_3} w_{t_4} \\
&= \mathcal{B}_\mathcal{A} \w^4.
\end{aligned}
$$
Therefore, for all $\w \in \mathbb{R}^{m+n}$,
\begin{equation}\label{2.4}
\mathcal{B}_\mathcal{A} \w^3 = \bar{\mathcal{B}_\mathcal{A}} \w^3,\quad \mathcal{B}_\mathcal{A} \w^4 = \bar{\mathcal{B}_\mathcal{A}} \w^4.
\end{equation}

We now formulate a generalized tensor eigenvalue problem associated with
the lifted symmetric tensor $\bar{\mathcal B}_{\mathcal A}$. In Definition~\ref{df2.1}, this corresponds to $p=4,q=2$ and $N=m+n$.
To allow different weights for the two blocks corresponding to $\mathbf x$ and $\mathbf y$, for
$\alpha,\beta>0$, define
\begin{equation}\label{11}
\mathbf B_{\alpha,\beta}:=
\operatorname{diag}
\bigl(2\beta^2\mathbf I_m,\,2\alpha^2\mathbf I_n\bigr).
\end{equation}
Then $\mathbf B_{\alpha,\beta}$ is a symmetric positive definite
matrix in $\mathbb R^{(m+n)\times(m+n)}$.

\bd\label{df2.2}
Let  $\mathcal T\in\mathbb R^{[4,m+n]}$, a pair $(\lambda,\mathbf w)\in
\mathbb R\times(\mathbb R^{m+n}\setminus\{\mathbf 0\})$
is called a mode-$k$ $\mathbf B_{\alpha,\beta}$-eigenpair of $\mathcal T$ if
$$
\mathcal T^{(k)}\mathbf w^3
=
\lambda\mathbf B_{\alpha,\beta}\mathbf w,
\qquad
\mathbf w^\top\mathbf B_{\alpha,\beta}\mathbf w=1.
$$
When $\mathcal T$ is symmetric, $\mathcal T^{(k)}\mathbf w^3$ is independent
of $k$. In this case, we simply call $(\lambda,\mathbf w)$ a $\mathbf B_{\alpha,\beta}$-eigenpair of $\mathcal T$.
\ed

Since $\bar{\mathcal B}_{\mathcal A}$ is symmetric, $(\lambda,\mathbf w)$ is a $\mathbf B_{\alpha,\beta}$-eigenpair of $\bar{\mathcal B}_{\mathcal A}$ if
$$
\bar{\mathcal B}_{\mathcal A}\mathbf w^3=\lambda\mathbf B_{\alpha,\beta}\mathbf w,
\qquad
\mathbf w^\top\mathbf B_{\alpha,\beta}\mathbf w=1.
$$
When $\alpha=\beta=\frac1{\sqrt2}$, one has $\mathbf B_{\alpha,\beta}=\mathbf I_{m+n}$, and the above system reduces to
$$
\bar{\mathcal B}_{\mathcal A}\mathbf w^3=\lambda\mathbf w,
\qquad
\mathbf w^\top\mathbf w=1,
$$
which is precisely the standard $Z$-eigenvalue problem. Hence, the standard $Z$-eigenvalue formulation is a special case of the present weighted formulation.


\begin{proposition}\label{pro1}
Let $\bar{\mathcal{B}_\mathcal{A}} \in \mathbb{R}^{[4,m+n]}$ be symmetric, and let
${\bf B}_{\alpha,\beta}=\operatorname{diag}(2\beta^2 {\bf I}_m,\,2\alpha^2 {\bf I}_n)$, $\alpha,\beta> 0$. Then, $\bar{\mathcal{B}_\mathcal{A}}$ has at least one ${\bf B}_{\alpha,\beta}$-eigenpair. Moreover, if $(\lambda,\w)$ is a ${\bf B}_{\alpha,\beta}$-eigenpair of $\bar{\mathcal{B}_\mathcal{A}}$, then $\lambda=\bar{\mathcal{B}_\mathcal{A}}\w^4$.
\end{proposition}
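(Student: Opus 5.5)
Both assertions can be obtained from a variational argument on the compact ellipsoid
\[
S:=\{\w\in\mathbb R^{m+n}:\ \w^\top\mathbf B_{\alpha,\beta}\w=1\},
\]
which is nonempty, closed and bounded because $\mathbf B_{\alpha,\beta}$ is symmetric positive definite ($\alpha,\beta>0$). The second assertion is a direct computation, so I would prove it first. Suppose $(\lambda,\w)$ is a $\mathbf B_{\alpha,\beta}$-eigenpair of $\bar{\mathcal B}_{\mathcal A}$. Take the inner product of both sides of $\bar{\mathcal B}_{\mathcal A}\w^3=\lambda\mathbf B_{\alpha,\beta}\w$ with $\w$. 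The left side becomes $\w^\top(\bar{\mathcal B}_{\mathcal A}\w^3)=\bar{\mathcal B}_{\mathcal A}\w^4$, since contracting the remaining index with $\w$ gives the full quartic form. The right side becomes $\lambda\,\w^\top\mathbf B_{\alpha,\beta}\w=\lambda$ by the normalization. Hence $\lambda=\bar{\mathcal B}_{\mathcal A}\w^4$.

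For existence, I would consider the smooth function $f(\w)=\bar{\mathcal B}_{\mathcal A}\w^4$ restricted to $S$. Since $f$ is continuous and $S$ is compact, $f$ attains its maximum on $S$ at some $\w^*$. The constraint function $g(\w)=\w^\top\mathbf B_{\alpha,\beta}\w-1$ has gradient $2\mathbf B_{\alpha,\beta}\w$. This gradient is nonzero on $S$, because $\mathbf B_{\alpha,\beta}$ is nonsingular and $\w\neq\mathbf 0$ there. So the linear independence constraint qualification holds, and the Lagrange multiplier rule yields some $\mu\in\mathbb R$ with $\nabla f(\w^*)=\mu\,\nabla g(\w^*)$.

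The only point needing care is computing $\nabla f$, and here the symmetry of $\bar{\mathcal B}_{\mathcal A}$ is essential. Differentiating the quartic form produces four terms, one for each index slot. By symmetry they coincide, giving $\nabla f(\w)=4\bar{\mathcal B}_{\mathcal A}\w^3$. The multiplier equation then reads $4\bar{\mathcal B}_{\mathcal A}(\w^*)^3=2\mu\mathbf B_{\alpha,\beta}\w^*$. Setting $\lambda:=\mu/2$ gives exactly $\bar{\mathcal B}_{\mathcal A}(\w^*)^3=\lambda\mathbf B_{\alpha,\beta}\w^*$, with $\w^*\neq\mathbf 0$ and $(\w^*)^\top\mathbf B_{\alpha,\beta}\w^*=1$. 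Thus $(\lambda,\w^*)$ is a $\mathbf B_{\alpha,\beta}$-eigenpair. By the first part, $\lambda=\max_{S}f$. The same argument applied to the minimizer gives a second eigenpair whose eigenvalue is $\min_S f$.
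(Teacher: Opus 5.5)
Your proposal is correct and follows essentially the same route as the paper. Both maximize $\bar{\mathcal B}_{\mathcal A}\w^4$ over the compact ellipsoid $\w^\top\mathbf B_{\alpha,\beta}\w=1$, apply the Lagrange multiplier rule using $\nabla(\bar{\mathcal B}_{\mathcal A}\w^4)=4\bar{\mathcal B}_{\mathcal A}\w^3$, set $\lambda=\mu/2$, and get $\lambda=\bar{\mathcal B}_{\mathcal A}\w^4$ by contracting the eigen-equation with $\w$. You also check explicitly that the constraint gradient $2\mathbf B_{\alpha,\beta}\w$ is nonzero on the feasible set, which the paper leaves implicit.
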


\proof Consider the following optimization problem
\begin{equation}\label{1}
\max\{\bar{\mathcal{B}_\mathcal{A}}\w^4:\w^\top {\bf B}_{\alpha,\beta} \w = 1\}.
\end{equation}
Since $\alpha,\beta>0$, the matrix ${\bf B}_{\alpha,\beta}$ is symmetric positive definite. Therefore, the feasible set
\[
\mathbb{S}:=\{\mathbf w\in\mathbb R^{m+n}:\w^\top {\bf B}_{\alpha,\beta}\w=1\}
\]
is nonempty and compact.
Moreover, the objective function $\bar{\mathcal{B}_\mathcal{A}}\w^4$ is continuous on $\w\in\mathbb R^{m+n}$. By the Weierstrass theorem, problem \eqref{1} admits an optimal solution $\mathbf w^*\in \mathbb{S}$.

The Lagrangian function of \eqref{1} is given by
\begin{equation}
L(\w,\mu)=\bar{\mathcal{B}_\mathcal{A}}\w^4-\mu(\w^\top {\bf B}_{\alpha,\beta}\w-1).
\end{equation}
By the Lagrange multiplier theorem, for the optimal solution $\w^*$, there exists $\mu^*\in\mathbb R$ such that
\[
\nabla_{\mathbf w}L(\mathbf w^*,\mu^*)={\bf 0},
\qquad
\nabla_{\mu L}(\mathbf w^*,\mu^*)=0.
\]
Since $\bar{\mathcal{B}}_\mathcal{A}$ is symmetric, it follows that
$$
\begin{cases}
\nabla_{\w}L(\w^*,\mu^*)=4\bar{\mathcal{B}_\mathcal{A}}{\w^*}^3-2\mu^*{\bf B}_{\alpha,\beta}{\w^*}={\bf 0},\\[10pt]
\nabla_{\mu}L(\w^*,\mu^*)=({\w^*})^\top {\bf B}_{\alpha,\beta}{\w^*}-1=0.
\end{cases}
$$
Let $\lambda^*:=\mu^*/2$. Then
\[
\bar{\mathcal{B}}_A\mathbf w^{*3}=\lambda^* {\bf B}_{\alpha,\beta}\mathbf w^*,
\qquad
(\mathbf w^*)^\top {\bf B}_{\alpha,\beta}\mathbf w^*=1,
\]
which shows that $(\lambda^*,\mathbf w^*)$ is a $\mathbf B_{\alpha,\beta}$-eigenpair of
$\bar{\mathcal{B}}_{\mathcal A}$.

Now let $(\lambda,\w)$ be any ${\bf B}_{\alpha,\beta}$-eigenpair of $\bar{\mathcal{B}}_\mathcal{A}$. Then,
$\bar{\mathcal{B}}_\mathcal{A}\w^3=\lambda {\bf B}_{\alpha,\beta}\w$.
Multiplying both sides by $\w^\top$ gives
\[
\bar{\mathcal{B}}_\mathcal{A}\w^4=\lambda\,\w^\top {\bf B}_{\alpha,\beta}\w.
\]
Since $\w^\top {\bf B}_{\alpha,\beta}\w=1$, we conclude that
$$
\lambda=\bar{\mathcal{B}}_\mathcal{A}\w^4.
$$ \qed

By \eqref{2.4}, we obtain the following result.


\begin{proposition}\label{prop2.1}
Let \(\mathcal A=(a_{ijkl})\in\mathbb P^{m\times n\times m\times n}\), and let
\(\mathcal B_{\mathcal A}\) and \(\bar{\mathcal B}_{\mathcal A}\) be defined by
\eqref{2.1} and \eqref{2.3}, respectively. Then, for any
\((\lambda,\mathbf w)\in\mathbb R\times(\mathbb R^{m+n}\setminus\{\mathbf0\})\),
\[
\mathcal B_{\mathcal A}^{(1)}\mathbf w^3
=
\lambda\mathbf B_{\alpha,\beta}\mathbf w,
\qquad
\mathbf w^\top\mathbf B_{\alpha,\beta}\mathbf w=1
\]
if and only if
\[
\bar{\mathcal B}_{\mathcal A}\mathbf w^3
=
\lambda\mathbf B_{\alpha,\beta}\mathbf w,
\qquad
\mathbf w^\top\mathbf B_{\alpha,\beta}\mathbf w=1.
\]
\end{proposition}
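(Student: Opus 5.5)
The plan is to reduce the statement to the vector identity \eqref{2.4}, that is, $\mathcal B_{\mathcal A}\mathbf w^3=\bar{\mathcal B}_{\mathcal A}\mathbf w^3$ for every $\mathbf w\in\mathbb R^{m+n}$. The normalization $\mathbf w^\top\mathbf B_{\alpha,\beta}\mathbf w=1$ and the right-hand side $\lambda\mathbf B_{\alpha,\beta}\mathbf w$ are identical in the two systems. The equivalence therefore follows once the two left-hand sides coincide.

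The first step is to check that $\mathcal B_{\mathcal A}^{(1)}\mathbf w^3$ is exactly the contraction $\mathcal B_{\mathcal A}\mathbf w^3$ used in \eqref{2.2} and \eqref{2.4}. This is immediate from the definition of the mode-$k$ product: for $k=1$, the $t_1$-th component is $\sum_{t_2,t_3,t_4}b_{t_1t_2t_3t_4}w_{t_2}w_{t_3}w_{t_4}$, which is precisely the expression computed before \eqref{2.2}.

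The second step is to use that $\bar{\mathcal B}_{\mathcal A}$ is symmetric. By Definition~\ref{df2.2}, its mode-$k$ product is then independent of $k$ and equals $\bar{\mathcal B}_{\mathcal A}\mathbf w^3$. Combining the two steps with \eqref{2.4} gives $\mathcal B^{(1)}_{\mathcal A}\mathbf w^3=\bar{\mathcal B}_{\mathcal A}\mathbf w^3$ for all $\mathbf w$. Each direction of the equivalence is then obtained by substituting one side for the other, with the constraint carried along unchanged.

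The only step with real content is \eqref{2.4}, which the paper has already established; we invoke it and do not reprove it. If one wanted to verify it independently, the right argument is not a term-by-term comparison of the six summands in \eqref{2.3}. Instead, one would use the partial symmetries $b_{t_1t_2t_3t_4}=b_{t_3t_2t_1t_4}=b_{t_1t_4t_3t_2}=b_{t_3t_4t_1t_2}$ together with the block support structure of \eqref{2.1}. These show that each contracted term equals the corresponding entry of \eqref{2.2}: the $\mathbf x$-block components are $\mathcal A\cdot\mathbf y\mathbf x\mathbf y$ and the $\mathbf y$-block components are $\mathcal A\mathbf x\mathbf y\mathbf x\cdot$. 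I expect no serious obstacle beyond this bookkeeping.
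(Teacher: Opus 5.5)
Your proposal is correct and follows the paper, which likewise obtains the proposition directly from \eqref{2.4}, since the right-hand side and the normalization are identical in the two systems. One caution on your optional aside: the symmetries you list only permute positions within $\{1,3\}$ and within $\{2,4\}$. Verifying \eqref{2.4} therefore also needs the identity $b_{t_1t_2t_3t_4}=b_{t_2t_1t_4t_3}$, which follows from the second case of \eqref{2.1} and $a_{ijkl}=a_{ilkj}$; it is this identity that turns the last three summands of \eqref{2.3} (which are mode-$2$ contractions) into mode-$1$ ones.
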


In view of Proposition~\ref{prop2.1}, it suffices to establish the relationship between the M-eigenpairs of $\mathcal{A}$ and the mode-$1$ $\mathbf{B}_{\alpha,\beta}$-eigenpairs of ${\mathcal{B}_{\mathcal{A}}}$.

\bt\label{th2.3}
Given $\mathcal{A}=(a_{ijkl}) \in \mathbb{P}^{m\times n \times m\times n}$, let
${\mathcal B}_{\mathcal A}\in\mathbb R^{[4,m+n]}$ be defined by \eqref{2.1}, and define ${\bf B}_{\alpha,\beta}=\operatorname{diag}(2\beta^2 {\bf I}_m,\,2\alpha^2 {\bf I}_n)$, where $\alpha,\beta>0$. Then

(1) If $(\lambda, \x, \y)$ is an $M$-eigenpair of $\mathcal{A}$, then $\left(\frac{\lambda}{8\alpha^2\beta^2}, (\frac{1}{2\beta}\x^\top,\frac{1}{2\alpha}\y^\top)^\top\right)$ is a mode-$1$  ${\bf B}_{\alpha,\beta}$-eigenpair of $\mathcal{B}_{\mathcal{A}}$.

(2) If $(\lambda, \w)$ is a mode-$1$ ${\bf B}_{\alpha,\beta}$-eigenpair of $\mathcal{B}_{\mathcal{A}}$ with $\lambda \neq 0$, $\w = (\w_x^\top, \w_y^\top)^\top$, $\w_x = (w_1, \dots, w_m)^\top$ and $\w_y = (w_{m+1}, \dots, w_{m+n})^\top$, then $\left(8\alpha^2\beta^2\lambda, 2\beta\w_x, 2\alpha\w_y\right)$ is an $M$-eigenpair of $\mathcal{A}$.

(3)  Let $(0, \w)$ be a mode-$1$ ${\bf B}_{\alpha,\beta}$-eigenpair of $\mathcal{B}_{\mathcal{A}}$, where $\w = (\w_x^\top, \w_y^\top)^\top$, $\w_x = (w_1, \dots, w_m)^\top$ and $\w_y = (w_{m+1}, \dots, w_{m+n})^\top$. If $\w_x\neq\bf 0$ and $\w_y\neq\bf 0$, then $\left(0, \frac{\w_x}{\|\w_x\|_2}, \frac{\w_y}{\|\w_y\|_2}\right)$ is an $M$-eigenpair of $\mathcal{A}$. If either $\w_x=\bf 0$ or $\w_y=\bf 0$, this zero eigenpair of the lifted tensor does not correspond to an $M$-eigenpair of $\mathcal{A}$.
\et

\proof (1) Let $(\lambda, \x, \y)$ be an $M$-eigenpair of $\mathcal{A}$. Then
$$
\mathcal{A}\cdot\y\x\y=\lambda \x,\qquad \mathcal{A}\x\y\x\cdot=\lambda \y,\qquad \x^\top \x=\y^\top \y=1.
$$
Define
$\w=(\frac{1}{2\beta}\x^\top,\frac{1}{2\alpha}\y^\top)^\top \in \mathbb{R}^{m+n}$. By \eqref{2.2} and \eqref{2.4}, we have
$$
\begin{aligned}
\mathcal{B}_{\mathcal{A}} \w^3 &=
\begin{pmatrix}
{\mathcal{A}(\frac{\y}{2\alpha})(\frac{\x}{2\beta})(\frac{\y}{2\alpha})} \\
{\mathcal{A}(\frac{\x}{2\beta})(\frac{\y}{2\alpha})(\frac{\x}{2\beta})}
\end{pmatrix}
=\begin{pmatrix}
\frac{1}{8\alpha^2\beta}\mathcal{A}\cdot\y\x\y  \\
\frac{1}{8\alpha\beta^2}\mathcal{A}\x\y\x\cdot
\end{pmatrix}
=\frac{1}{8\alpha^2\beta^2}
\begin{pmatrix}
\beta\mathcal{A}\cdot\y\x\y \\
\alpha\mathcal{A}\x\y\x\cdot
\end{pmatrix}\\
&=\frac{\lambda}{8\alpha^2\beta^2}
\begin{pmatrix}
2\beta^2 \boldsymbol{I}_m & \boldsymbol{0} \\
\boldsymbol{0} & 2\alpha^2 \boldsymbol{I}_n
\end{pmatrix}
\begin{pmatrix}
\frac{1}{2\beta} \x \\
\frac{1}{2\alpha} \y
\end{pmatrix}
=\frac{\lambda}{8\alpha^2\beta^2}{\bf B}_{\alpha,\beta}\w.
\end{aligned}
$$
and
$$
\w^\top {\bf B}_{\alpha,\beta}\w=\left( \frac{1}{2\beta} \x, \frac{1}{2\alpha}\y\right)
\begin{pmatrix}
2\beta^2 \boldsymbol{I}_m & \boldsymbol{0} \\
\boldsymbol{0} & 2\alpha^2 \boldsymbol{I}_n
\end{pmatrix}
\begin{pmatrix}
\frac{1}{2\beta} \x \\
\frac{1}{2\alpha} \y
\end{pmatrix}
=\frac{1}{2} \x^\top \x + \frac{1}{2}\y^\top\y = 1,
$$
it can be seen that $\left(\frac{\lambda}{8\alpha^2\beta^2},\w\right)$ is a mode-$1$ ${\bf B}_{\alpha,\beta}$-eigenpair of $\mathcal{B}_{\mathcal{A}}$.

(2) Let $(\lambda,\w)$ be a mode-$1$ ${\bf B}_{\alpha,\beta}$-eigenpair of $\mathcal B_{\mathcal A}$ with $\lambda\neq 0$, where $\w=(\w_x^\top,\w_y^\top)^\top$, and $\w_x=(w_1,\dots,w_m)^\top,\w_y=(w_{m+1},\dots,w_{m+n})^\top$. By Proposition~\ref{prop2.1} and \eqref{2.2}, it follows that
$$
\bar{\mathcal B}_{\mathcal A}\mathbf w^3=\lambda\mathbf B_{\alpha,\beta}\mathbf w
$$
is equivalent to
$$
\begin{pmatrix}
\mathcal A\cdot\mathbf w_y\mathbf w_x\mathbf w_y\\
\mathcal A\mathbf w_x\mathbf w_y\mathbf w_x\cdot
\end{pmatrix}
=\lambda\begin{pmatrix}
2\beta^2\mathbf w_x\\
2\alpha^2\mathbf w_y
\end{pmatrix}.
$$
That is,
\begin{equation}\label{eq:lifted-block}
\mathcal A\cdot\mathbf w_y\mathbf w_x\mathbf w_y
=2\lambda\beta^2\mathbf w_x,\qquad
\mathcal A\mathbf w_x\mathbf w_y\mathbf w_x\cdot
=2\lambda\alpha^2\mathbf w_y .
\end{equation}

We first show that both $\w_x$ and $\w_y$ are nonzero vectors. If $\w_y={\bf 0}$, then the
first equation in \eqref{eq:lifted-block} reduces to
$2\lambda\beta^2\mathbf w_x=\mathbf0$. Since $\lambda\neq0$ and $\beta\neq0$, it follows that $\w_x=\mathbf0$, contradicting $\w\neq\mathbf0$. Hence $\w_y\neq\mathbf0$. Similarly,
$\w_x\neq\mathbf0$.

Multiplying the first equation in \eqref{eq:lifted-block} by
\(\mathbf w_x^\top\) and the second one by $\w_y^\top$, we obtain
$$
\mathcal A\mathbf w_x\mathbf w_y\mathbf w_x\mathbf w_y
=2\lambda\beta^2\|\mathbf w_x\|^2,
\qquad
\mathcal A\mathbf w_x\mathbf w_y\mathbf w_x\mathbf w_y
=2\lambda\alpha^2\|\mathbf w_y\|^2.
$$
Since $\lambda\neq0$, this implies
$
\beta^2\|\mathbf w_x\|^2=\alpha^2\|\mathbf w_y\|^2
$.
Together with the normalization
$$
\mathbf w^\top\mathbf B_{\alpha,\beta}\mathbf w
=2\beta^2\|\mathbf w_x\|^2+2\alpha^2\|\mathbf w_y\|^2=1,
$$
therefore implies
$$
2\beta^2\|\mathbf w_x\|^2=2\alpha^2\|\mathbf w_y\|^2=\frac12.
$$

Let $\x = 2\beta\w_x$ and $\y =2\alpha\w_y$. Then $\w_x =\frac{\x}{2\beta}$, $\w_y =\frac{\y}{2\alpha}$ and $\|\x\|_2 = \|\y\|_2 = 1$. From
\[
\begin{aligned}
\begin{pmatrix}
\mathcal{A} \y\x \y \\
\mathcal{A}\x \y\x
\end{pmatrix}
&=
\begin{pmatrix}
\mathcal{A}\cdot(2\alpha\w_y)(2\beta\w_x)(2\alpha\w_y) \\
\mathcal{A}(2\beta\w_x)(2\alpha\w_y)(2\beta\w_x)\cdot
\end{pmatrix}
=
\begin{pmatrix}
8\alpha^2\beta\mathcal{A}\cdot\w_y\w_x\w_y \\
8\alpha\beta^2\mathcal{A}\w_x\w_y\w_x\cdot
\end{pmatrix}\\
&=
\begin{pmatrix}
8\alpha^2\beta 2\lambda\beta^2\w_x \\
8\alpha\beta^2 2\lambda\alpha^2\w_y
\end{pmatrix}= 8\alpha^2\beta^2\lambda
\begin{pmatrix}
2\beta\w_x \\
2\alpha\w_y
\end{pmatrix}
= 8\alpha^2\beta^2\lambda
\begin{pmatrix}
\x \\
\y
\end{pmatrix},
\end{aligned}
\]
it can be seen that $(8\alpha^2\beta^2\lambda, 2\beta\w_x,2\alpha\w_y)$ is an $M$-eigenpair of $\mathcal{A}$.

(3) Let $(0,\mathbf w)$ be a mode-$1$ ${\bf B}_{\alpha,\beta}$-eigenpair of $\mathcal{B}_{\mathcal{A}}$, where $\w = (\w_x^\top, \w_y^\top)^\top$, $\w_x = (w_1, \dots, w_m)^\top$ and $\w_y = (w_{m+1}, \dots, w_{m+n})^\top$. Then, by \eqref{2.2}, we have
\begin{equation}\label{2.7}
\lambda{\bf B}_{\alpha,\beta}
\begin{pmatrix}
\w_x \\
\w_y
\end{pmatrix}
= \lambda{\bf B}_{\alpha,\beta} \w = \mathcal{B}_{\mathcal{A}} \w^3=
\begin{pmatrix}
\mathcal{A} \cdot\w_y \w_x\w_y \\
\mathcal{A} \w_x\w_y\w_x\cdot
\end{pmatrix}.
\end{equation}

If both $\w_x$ and $\w_y$ are nonzero vectors. Let $\x = \frac{\w_x}{\|\w_x\|_2}$ and $\y = \frac{\w_y}{\|\w_y\|_2}$. Then $\|\x\|_2 = \|\y\|_2 = 1$. From
\[
\begin{pmatrix}
\mathcal{A}\cdot\y\x \y \\
\mathcal{A}\x \y\x\cdot
\end{pmatrix}
=\begin{pmatrix}
\frac{\mathcal{A} \w_y \w_x \w_y}{\|\w_y\|_2\|\w_x\|_2\|\w_y\|_2} \\
\frac{\mathcal{A} \w_x \w_y\w_x} {\|\w_x\|_2 \|\w_y\|_2\|\w_x\|_2}
\end{pmatrix}
=\lambda{\bf B}_{\alpha,\beta}
\begin{pmatrix}
\frac{ \w_x}{\|\w_x\|_2\|\w_y\|_2^2} \\
\frac{\w_y}{\|\w_x\|_2^2 \|\w_y\|_2}
\end{pmatrix}
=
\begin{pmatrix}
0 \\
0
\end{pmatrix}
= 0
\begin{pmatrix}
\x \\
\y
\end{pmatrix},
\]
it can be seen that $\lambda = 0$ is an $M$-eigenvalue of $\mathcal{A}$ with associated $M$-eigenvectors $(\x, \y)$.

Now suppose that either $\w_x$ or $\w_y$ is a zero vector. If $\w_x=\bf 0$, by $\w \in \mathbb{R}^{m+n} \setminus \{\bf 0\}$, then $\w_y\neq\bf 0$. Moreover, it follows from \eqref{2.7} that if $(\x,\y)$ is a pair of $M$-eigenvectors of $\mathcal{A}$, then $\x = 2\beta\w_x$, where $\beta\neq0$. By $\|\w_x\|_2 = 0$, we have $\|\x\|_2 = 0$, which implies that the zero eigenpair $(0,\w)$ of the lifted tensor cannot generate an $M$-eigenpair through the above recovery procedure. Similarly, the case $\w_y=\mathbf0$ can be treated analogously.
\qed

Combining Proposition~\ref{prop2.1} with Theorem~\ref{th2.3}, we obtain the corresponding result.

\begin{corollary}\label{th2.3}
Let $\mathcal{A}=(a_{ijkl}) \in \mathbb{P}^{m\times n \times m\times n}$, and let
\(\bar{\mathcal B}_{\mathcal A}\in\mathbb R^{[4,m+n]}\) be defined by \eqref{2.3}.

(1) If $(\lambda, \x, \y)$ is an $M$-eigenpair of $\mathcal{A}$, then $\left(\frac{\lambda}{8\alpha^2\beta^2}, (\frac{1}{2\beta}\x^\top,\frac{1}{2\alpha}\y^\top)^\top\right)$  is a ${\bf B}_{\alpha,\beta}$-eigenpair of $\bar{\mathcal{B}_{\mathcal{A}}}$.

(2) If $(\lambda, \w)$ is a ${\bf B}_{\alpha,\beta}$-eigenpair of $\bar{\mathcal{B}_{\mathcal{A}}}$ with $\lambda \neq 0$, where $\w = (\w_x^\top,\w_y^\top)^\top$, and $\w_x = (w_1, \dots, w_m)^\top$, $\w_y = (w_{m+1}, \dots, w_{m+n})^\top$, then $\left(8\alpha^2\beta^2\lambda, 2\beta\w_x, 2\alpha\w_y\right)$ is an $M$-eigenpair of $\mathcal{A}$.

(3)  Let $(0, \w)$ be a ${\bf B}_{\alpha,\beta}$-eigenpair of $\bar{\mathcal{B}_{\mathcal{A}}}$, where $\w = (\w_x^\top,\w_y^\top)^\top$, $\w_x = (w_1, \dots, w_m)^\top$ and $\w_y = (w_{m+1}, \dots, w_{m+n})^\top$. If $\w_x\neq\bf 0$ and $\w_y\neq\bf 0$, then $\left(0, \frac{\w_x}{\|\w_x\|_2}, \frac{\w_y}{\|\w_y\|_2}\right)$ is an $M$-eigenpair of $\mathcal{A}$. If either $\w_x=\bf 0$ or $\w_y=\bf 0$, then this zero eigenpair of the lifted tensor does not correspond to an $M$-eigenpair of $\mathcal{A}$.
\end{corollary}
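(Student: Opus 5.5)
The corollary follows by transferring each part of Theorem~\ref{th2.3} through the equivalence in Proposition~\ref{prop2.1}. That proposition says that, for any $(\lambda,\mathbf w)$ with $\mathbf w\neq\mathbf 0$, the system $\mathcal B_{\mathcal A}^{(1)}\mathbf w^3=\lambda\mathbf B_{\alpha,\beta}\mathbf w$, $\mathbf w^\top\mathbf B_{\alpha,\beta}\mathbf w=1$ holds if and only if the same system with $\bar{\mathcal B}_{\mathcal A}$ in place of $\mathcal B_{\mathcal A}^{(1)}$ holds. This equivalence rests on \eqref{2.4}, namely $\mathcal B_{\mathcal A}\mathbf w^3=\bar{\mathcal B}_{\mathcal A}\mathbf w^3$ for all $\mathbf w$. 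So the mode-$1$ $\mathbf B_{\alpha,\beta}$-eigenpairs of $\mathcal B_{\mathcal A}$ and the $\mathbf B_{\alpha,\beta}$-eigenpairs of $\bar{\mathcal B}_{\mathcal A}$ (in the sense of Definition~\ref{df2.2}) are the same set of pairs. I take $\alpha,\beta>0$ implicitly, as in Theorem~\ref{th2.3}.

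For part (1), start from an $M$-eigenpair $(\lambda,\mathbf x,\mathbf y)$. Part (1) of Theorem~\ref{th2.3} shows that the pair $\bigl(\frac{\lambda}{8\alpha^2\beta^2},(\frac{1}{2\beta}\mathbf x^\top,\frac{1}{2\alpha}\mathbf y^\top)^\top\bigr)$ is a mode-$1$ $\mathbf B_{\alpha,\beta}$-eigenpair of $\mathcal B_{\mathcal A}$. The forward direction of Proposition~\ref{prop2.1} then makes it a $\mathbf B_{\alpha,\beta}$-eigenpair of $\bar{\mathcal B}_{\mathcal A}$.

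For parts (2) and (3), start from a $\mathbf B_{\alpha,\beta}$-eigenpair $(\lambda,\mathbf w)$ of $\bar{\mathcal B}_{\mathcal A}$. The reverse direction of Proposition~\ref{prop2.1} makes it a mode-$1$ $\mathbf B_{\alpha,\beta}$-eigenpair of $\mathcal B_{\mathcal A}$. If $\lambda\neq0$, part (2) of Theorem~\ref{th2.3} applies. If $\lambda=0$, part (3) applies, and the case split on whether $\mathbf w_x$ and $\mathbf w_y$ vanish carries over unchanged, since the block decomposition of $\mathbf w$ is the same in both settings.

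There is no real obstacle here; the argument is bookkeeping. The one thing to check is that Proposition~\ref{prop2.1} is a genuine two-way equivalence with the same $\lambda$ and the same normalization, so that no scaling factor is lost in the transfer. This is immediate from \eqref{2.4}, because the normalization constraint does not involve the tensor at all.
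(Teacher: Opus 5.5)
Your proposal is correct and is exactly the paper's argument: the corollary is obtained by combining Proposition~\ref{prop2.1} with the three parts of the preceding theorem. Proposition~\ref{prop2.1} is the equivalence, via \eqref{2.4}, between mode-$1$ $\mathbf B_{\alpha,\beta}$-eigenpairs of $\mathcal B_{\mathcal A}$ and $\mathbf B_{\alpha,\beta}$-eigenpairs of $\bar{\mathcal B}_{\mathcal A}$. Its one non-bookkeeping input, \eqref{2.4}, does hold: the six-term average in \eqref{2.3} gives $\bar{\mathcal B}_{\mathcal A}\mathbf w^3=\frac12\bigl(\mathcal B_{\mathcal A}^{(1)}\mathbf w^3+\mathcal B_{\mathcal A}^{(2)}\mathbf w^3\bigr)$, and these two contractions coincide by the partial symmetry $a_{ijkl}=a_{ilkj}$.
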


Based on the equivalence established in Corollary \ref{th2.3}, it is evident that the $M$-eigenpairs of $\mathcal A$ can be derived by computing the ${\bf B}_{\alpha,\beta}$-eigenpairs of the lifted symmetric tensor $\bar{\mathcal B}_{\mathcal A}$. The procedure is summarized in Algorithm \ref{alg1}.

\begin{algorithm}[h]
\caption{The lifted \(\mathbf B_{\alpha,\beta}\)-eigenpair method for computing \(M\)-eigenpairs}
\label{alg1}
\begin{algorithmic}[1]
\Require A tensor \(\mathcal A\in\mathbb P^{m\times n\times m\times n}\), parameters
\(\alpha,\beta>0\).
\Ensure A set \(\mathcal M\) of real \(M\)-eigenpairs of \(\mathcal A\).

\State Construct the lifted tensor \(\mathcal B_{\mathcal A}\in\mathbb R^{[4,m+n]}\) by \eqref{2.1}.
\State Construct the symmetric tensor \(\bar{\mathcal B}_{\mathcal A}\in\mathbb R^{[4,m+n]}\) by \eqref{2.3}.
\State Set
\[
\mathbf B_{\alpha,\beta}=\operatorname{diag}(2\beta^2\mathbf I_m,\,2\alpha^2\mathbf I_n).
\]
\State Compute the set \(\mathcal E\) of real \(\mathbf B_{\alpha,\beta}\)-eigenpairs
\((\mu,\mathbf w)\) of \(\bar{\mathcal B}_{\mathcal A}\).
\State Initialize \(\mathcal M\leftarrow\emptyset\).

\For{each \((\mu,\mathbf w)\in\mathcal E\)}
    \State Partition \(\mathbf w=(\mathbf w_x^\top,\mathbf w_y^\top)^\top\), where
    \(\mathbf w_x\in\mathbb R^m\) and \(\mathbf w_y\in\mathbb R^n\).
    \If{\(\mu\neq0\)}
        \State Set
        \[
        \lambda\leftarrow 8\alpha^2\beta^2\mu,\qquad
        \mathbf x\leftarrow 2\beta\mathbf w_x,\qquad
        \mathbf y\leftarrow 2\alpha\mathbf w_y.
        \]
        \State Add \((\lambda,\mathbf x,\mathbf y)\) to \(\mathcal M\).
    \Else
        \If{\(\mathbf w_x\neq\mathbf0\) and \(\mathbf w_y\neq\mathbf0\)}
            \State Set
            \[
            \lambda\leftarrow0,\qquad
            \mathbf x\leftarrow\frac{\mathbf w_x}{\|\mathbf w_x\|},\qquad
            \mathbf y\leftarrow\frac{\mathbf w_y}{\|\mathbf w_y\|}.
            \]
            \State Add \((\lambda,\mathbf x,\mathbf y)\) to \(\mathcal M\).
        \Else
            \State Discard this solution.
        \EndIf
    \EndIf
\EndFor

\State \Return \(\mathcal M\).
\end{algorithmic}
\end{algorithm}

\section{Perturbation bounds for the largest $M$-eigenvalue}
In this section, we derive a perturbation bound for the largest $M$-eigenvalue under additive perturbations and establish its exact counterpart in the lifted $\mathbf B_{\alpha,\beta}$-eigenvalue formulation.

Let $\mathcal{A}\in\mathbb P^{m\times n\times m\times n}$ and let $\mathcal E\in\mathbb P^{m\times n\times m\times n}$ be a perturbation tensor. Set $\tilde{\mathcal{A}}=\mathcal{A}+\mathcal{E}$. Based on (1.2), we first derive the following perturbation bound.

\begin{theorem}\label{th3.1}
Let $\mathcal A,\mathcal E\in\mathbb P^{m\times n\times m\times n}$, and set perturbed tensor $\widetilde{\mathcal A}=\mathcal A+\mathcal E$. Then
\begin{equation}\label{3.1}
\lambda_{\max}^{M}(\mathcal{A}) +\lambda_{\min}^{M}(\mathcal{E}) \leq \lambda_{\max}^{M}(\tilde{\mathcal{A}}) \leq \lambda_{\max}^{M}(\mathcal{A}) + \lambda_{\max}^{M}(\mathcal{E}).
\end{equation}
\end{theorem}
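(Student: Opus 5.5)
The plan is to work entirely with the variational characterization \eqref{1.2}, using the linearity of the biquadratic form in the tensor argument. For any $\x\in\mathbb R^m$ and $\y\in\mathbb R^n$, the definition of $\mathcal A\x\y\x\y$ as a sum over entries gives
\[
\tilde{\mathcal A}\x\y\x\y=\mathcal A\x\y\x\y+\mathcal E\x\y\x\y .
\]
Partial symmetry of $\tilde{\mathcal A}$ is inherited from $\mathcal A$ and $\mathcal E$, since $\mathbb P^{m\times n\times m\times n}$ is a linear subspace. Hence $\lambda_{\max}^M(\tilde{\mathcal A})$ is well defined by \eqref{1.2}. The feasible set $\{\x^\top\x=1,\ \y^\top\y=1\}$ is compact, so all maxima and minima are attained.

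For the upper bound, take a maximizer $(\tilde\x,\tilde\y)$ of $\tilde{\mathcal A}\x\y\x\y$ over the product of unit spheres. Then
\[
\lambda_{\max}^M(\tilde{\mathcal A})=\mathcal A\tilde\x\tilde\y\tilde\x\tilde\y+\mathcal E\tilde\x\tilde\y\tilde\x\tilde\y\le \lambda_{\max}^M(\mathcal A)+\lambda_{\max}^M(\mathcal E),
\]
because each term is bounded by the maximum of the corresponding form over the same feasible set.

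For the lower bound, take a maximizer $(\x^*,\y^*)$ of $\mathcal A\x\y\x\y$. Since $(\x^*,\y^*)$ is feasible for the problem defining $\lambda_{\max}^M(\tilde{\mathcal A})$,
\[
\lambda_{\max}^M(\tilde{\mathcal A})\ge \tilde{\mathcal A}\x^*\y^*\x^*\y^*=\lambda_{\max}^M(\mathcal A)+\mathcal E\x^*\y^*\x^*\y^*\ge \lambda_{\max}^M(\mathcal A)+\lambda_{\min}^M(\mathcal E).
\]

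There is no real obstacle here. The only point needing care is that the quantities in \eqref{1.2} are defined as optimal values rather than via the eigen-system \eqref{1.1}, so the argument relies only on \eqref{1.2} and the existence of optimizers, both of which hold by compactness.
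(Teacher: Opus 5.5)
Your proposal is correct and follows essentially the same argument as the paper. For the upper bound, you split the biquadratic form of $\widetilde{\mathcal A}$ and bound each part by its maximum over the product of unit spheres; for the lower bound, you evaluate at a maximizer of $\mathcal A\x\y\x\y$. The only cosmetic difference is that you take $(\x^*,\y^*)$ directly as a maximizer in \eqref{1.2}, whereas the paper phrases it as a pair of unit $M$-eigenvectors associated with $\lambda_{\max}^M(\mathcal A)$. Your phrasing is slightly cleaner, since it uses only the variational definition.
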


\proof By the variational characterization \eqref{1.2}, we have
\begin{equation}\label{3.2}
\begin{aligned}
\lambda_{\max}^{M}(\tilde{\mathcal{A}}) &= \max\{\tilde{\mathcal{A}}\x\y\x\y: \x^\top\x = 1,\ \y^\top\y= 1\} \\
&= \max\{\mathcal{A}\x\y\x\y+\mathcal{E}\x\y\x\y:\x^\top\x = 1,\ \y^\top\y= 1,\} \\
&\leq \max\{\mathcal{A}\x\y\x\y:\x^\top\x = 1,\ \y^\top\y= 1\} + \max\{\mathcal{E}\x\y\x\y:\x^\top\x = 1,\ \y^\top\y= 1\} \\
&= \lambda_{\max}^{M}(\mathcal{A}) + \lambda_{\max}^{M}(\mathcal{E}),
\end{aligned}
\end{equation}
which proves the upper bound.

On the other hand, let $(\x,\y)$ be a pair of unit $M$-eigenvectors associated with $\lambda_{M\max}(\mathcal{A})$. Then
\begin{equation}\label{3.3}
\begin{aligned}
\lambda_{\max}^{M}(\tilde{\mathcal{A}}) &\geq \tilde{\mathcal{A}}\x\y\x\y= \mathcal{A}\x\y\x\y+ \mathcal{E}\x\y\x\y= \lambda_{\max}^{M}(\mathcal{A}) +\mathcal{E}\x\y\x\y\geq \lambda_{\max}^{M}(\mathcal{A})+\lambda_{\min}^{M}(\mathcal{E}).
\end{aligned}
\end{equation}

Combining \eqref{3.2} and \eqref{3.3}, we get the perturbation bound \eqref{3.1}.
\qed

Using the eigenvalue correspondence established in Theorem~\ref{th2.3}, the perturbation bound in
Theorem~\ref{th3.1} can be equivalently expressed in the lifted formulation. In what follows,
$\lambda_{\min}^{B}$ and $\lambda_{\max}^{B}$ denote the smallest and largest recoverable real
$\mathbf B_{\alpha,\beta}$-eigenvalues, respectively, of the corresponding symmetric lifted tensor.



\begin{corollary}\label{co3.1}
Let \(\mathcal A,\mathcal E\in\mathbb P^{m\times n\times m\times n}\),
and set perturbed tensor \(\widetilde{\mathcal A}=\mathcal A+\mathcal E\). Then
\[
\lambda_{\max}^{M}(\mathcal A)
+
8\alpha^2\beta^2\lambda_{\min}^{B}(\mathcal B_{\mathcal E})
\le
\lambda_{\max}^{M}(\widetilde{\mathcal A})
\le
\lambda_{\max}^{M}(\mathcal A)
+
8\alpha^2\beta^2\lambda_{\max}^{B}(\mathcal B_{\mathcal E}).
\]
\end{corollary}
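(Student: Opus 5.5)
The plan is to combine Theorem~\ref{th3.1} with the eigenvalue correspondence of Theorem~\ref{th2.3} and Corollary~\ref{th2.3}, applied to the perturbation tensor $\mathcal E$. It then suffices to prove the two identities
\[
\lambda_{\max}^{M}(\mathcal E)=8\alpha^2\beta^2\lambda_{\max}^{B}(\mathcal B_{\mathcal E}),\qquad
\lambda_{\min}^{M}(\mathcal E)=8\alpha^2\beta^2\lambda_{\min}^{B}(\mathcal B_{\mathcal E}).
\]
Substituting these into \eqref{3.1} gives the stated two-sided bound directly. Here $\lambda^{B}(\mathcal B_{\mathcal E})$ is interpreted, as the paper indicates, through the symmetrized tensor $\bar{\mathcal B}_{\mathcal E}$, using Proposition~\ref{prop2.1}.

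For the identities, consider the set $\Lambda_M$ of real $M$-eigenvalues of $\mathcal E$ and the set $\Lambda_B$ of recoverable real $\mathbf B_{\alpha,\beta}$-eigenvalues of $\bar{\mathcal B}_{\mathcal E}$. By recoverable we mean eigenpairs $(\mu,\w)$ with $\mu\neq0$, or with $\mu=0$ and both $\w_x\neq\mathbf 0$ and $\w_y\neq\mathbf 0$. I will show
\[
\Lambda_M=8\alpha^2\beta^2\Lambda_B.
\]
For the inclusion $\Lambda_M\subseteq 8\alpha^2\beta^2\Lambda_B$, part (1) of Corollary~\ref{th2.3} maps each $M$-eigenpair $(\lambda,\x,\y)$ to a $\mathbf B_{\alpha,\beta}$-eigenpair with eigenvalue $\lambda/(8\alpha^2\beta^2)$ and block vector $(\x/(2\beta),\y/(2\alpha))$. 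Both blocks of this vector are nonzero, so it is recoverable even when $\lambda=0$. For the reverse inclusion, parts (2) and (3) show that every recoverable $\mu\neq0$ gives the $M$-eigenvalue $8\alpha^2\beta^2\mu$, and every recoverable $\mu=0$ gives the $M$-eigenvalue $0$.

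Since $\alpha,\beta>0$, multiplication by $8\alpha^2\beta^2$ is order preserving, so the maxima and minima of the two sets correspond. It remains to check that $\lambda_{\max}^M(\mathcal E)$ and $\lambda_{\min}^M(\mathcal E)$ as defined in \eqref{1.2} really are the largest and smallest elements of $\Lambda_M$. The maximizer of the biquadratic form over the product of spheres satisfies KKT conditions, which give $\mathcal E\cdot\y\x\y=\mu_1\x$ and $\mathcal E\x\y\x\cdot=\mu_2\y$. Contracting with $\x$ and $\y$ respectively gives $\mu_1=\mu_2=\mathcal E\x\y\x\y$. So the optimum is an $M$-eigenvalue. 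Conversely, every $M$-eigenvalue equals $\mathcal E\x\y\x\y$ at a feasible point, so it lies between the minimum and maximum in \eqref{1.2}. The same argument applies to the minimum, and the sets are nonempty, so the extrema are attained.

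The only delicate step is the bookkeeping around the eigenvalue $0$. Nonrecoverable zero eigenpairs, those with $\w_x=\mathbf 0$ or $\w_y=\mathbf 0$, must be excluded, which is exactly why the corollary is stated with recoverable eigenvalues. Otherwise a spurious $0$ could become $\lambda_{\min}^{B}$ or $\lambda_{\max}^{B}$ when, for example, $\mathcal E$ is positive definite in the biquadratic sense. With that restriction, the correspondence above is a bijection at the level of eigenvalue sets, and the corollary follows.
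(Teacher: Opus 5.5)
Your proposal is correct and follows the paper's route: apply the lifted eigenpair correspondence (Theorem~\ref{th2.3} with Proposition~\ref{prop2.1}) to $\mathcal E$ to get $\lambda_{\max}^{M}(\mathcal E)=8\alpha^2\beta^2\lambda_{\max}^{B}(\mathcal B_{\mathcal E})$ and $\lambda_{\min}^{M}(\mathcal E)=8\alpha^2\beta^2\lambda_{\min}^{B}(\mathcal B_{\mathcal E})$, then substitute these into \eqref{3.1}. Your extra checks supply steps the paper leaves implicit: the exact scaling $\Lambda_M=8\alpha^2\beta^2\Lambda_B$ on recoverable eigenvalues, and the KKT argument that the extrema in \eqref{1.2} are attained $M$-eigenvalues. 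The linearity $\mathcal B_{\widetilde{\mathcal A}}=\mathcal B_{\mathcal A}+\mathcal B_{\mathcal E}$ that the paper invokes is not actually needed for this inequality.
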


\proof
Since the lifting map is linear, we have
\[
\mathcal B_{\widetilde{\mathcal A}}
=
\mathcal B_{\mathcal A}
+
\mathcal B_{\mathcal E}.
\]
Moreover, by the eigenvalue correspondence in Theorem \ref{th2.3}, applied to
the perturbation tensor \(\mathcal E\),
\[
\lambda_{\max}^{M}(\mathcal E)
=
8\alpha^2\beta^2\lambda_{\max}^{B}(\mathcal B_{\mathcal E}),
\qquad
\lambda_{\min}^{M}(\mathcal E)
=
8\alpha^2\beta^2\lambda_{\min}^{B}(\mathcal B_{\mathcal E}).
\]
Substituting these identities into Theorem 3.1 gives the desired
bound.
\(\square\)

\begin{Remark}
Corollary \ref{co3.1} is not a sharper perturbation estimate than Theorem \ref{th3.1}. Rather,  it
gives its exact representation in the lifted formulation. Indeed, the linearity of the lifting map ensures that additive perturbations of $\mathcal A$ remain additive after lifting, while the eigenvalue correspondence shows that the perturbation interval is preserved up to the scaling factor $8\alpha^2\beta^2$. Therefore, the proposed transformation introduces no additional relaxation with respect to the derived perturbation bounds.
\end{Remark}

For comparison with the standard $Z$-eigenvalue normalization, we introduce two spectral radii for the lifting construction $\mathcal B_{\mathcal A}$. Define
$$
\rho_Z(\mathcal B_{\mathcal A}):=\max\left\{|\mathcal B_{\mathcal A}\mathbf z^4|:\ \mathbf z^\top\mathbf z=1\right\},
$$
and
$$
\rho_B(\mathcal B_{\mathcal A}):=\max\left\{|\mathcal B_{\mathcal A}\mathbf w^4|:\ \mathbf w^\top\mathbf B_{\alpha,\beta}\mathbf w=1\right\},
$$
where $\mathbf B_{\alpha,\beta}$ is defined in
\eqref{11}. Here, $\rho_Z({\mathcal B}_{\mathcal A})$ is the standard $Z$-spectral radius,
whereas $\rho_{B}({\mathcal B}_{\mathcal A})$ is its weighted counterpart under the $\mathbf B_{\alpha,\beta}$-normalization.


The following lemma establishes their exact scaling relation.

\begin{lemma}\label{lem3.1}
Let $\mathcal A\in\mathbb P^{m\times n\times m\times n}$, and let
${\mathcal B}_{\mathcal A}\in \mathbb{R}^{[4, m+n]}$ be the associated lifted tensor. Then
$$\rho_B(\mathcal{B}_{\mathcal A})=\frac{1}{4\alpha^2\beta^2} \rho_Z(\mathcal{B}_{\mathcal A}).$$
\end{lemma}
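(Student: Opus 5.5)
The plan is to exploit the block bihomogeneity of the quartic form $\mathcal B_{\mathcal A}\mathbf w^4$ and map the $\mathbf B_{\alpha,\beta}$-ellipsoid onto the unit sphere by a diagonal change of variables.

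\emph{Step 1 (the quartic form).} For $\mathbf w=(\mathbf w_x^\top,\mathbf w_y^\top)^\top$, I will use \eqref{2.2} and take the inner product with $\mathbf w$:
\[
\mathcal B_{\mathcal A}\mathbf w^4=\mathbf w_x^\top(\mathcal A\cdot\mathbf w_y\mathbf w_x\mathbf w_y)+\mathbf w_y^\top(\mathcal A\mathbf w_x\mathbf w_y\mathbf w_x\cdot)=2\,\mathcal A\mathbf w_x\mathbf w_y\mathbf w_x\mathbf w_y .
\]
The identification of the second term with $\mathcal A\mathbf w_x\mathbf w_y\mathbf w_x\mathbf w_y$ follows directly from the definitions in Definition~\ref{def1.2}. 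It follows that, for scalars $s,t$,
\[
\mathcal B_{\mathcal A}(s\mathbf w_x,t\mathbf w_y)^4=s^2t^2\,\mathcal B_{\mathcal A}(\mathbf w_x,\mathbf w_y)^4,
\]
so the form is homogeneous of degree two in each block separately.

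\emph{Step 2 (change of variables).} Let
\[
\mathbf D=\operatorname{diag}\Bigl(\tfrac{1}{\sqrt2\beta}\mathbf I_m,\tfrac{1}{\sqrt2\alpha}\mathbf I_n\Bigr),
\]
so that $\mathbf D^\top\mathbf B_{\alpha,\beta}\mathbf D=\mathbf I_{m+n}$. The map $\mathbf z\mapsto\mathbf w=\mathbf D\mathbf z$ is then a bijection from the unit sphere $\{\mathbf z^\top\mathbf z=1\}$ onto the ellipsoid $\{\mathbf w^\top\mathbf B_{\alpha,\beta}\mathbf w=1\}$. By the block homogeneity from Step 1, with $s=1/(\sqrt2\beta)$ and $t=1/(\sqrt2\alpha)$,
\[
\mathcal B_{\mathcal A}(\mathbf D\mathbf z)^4=\frac{1}{2\beta^2}\cdot\frac{1}{2\alpha^2}\,\mathcal B_{\mathcal A}\mathbf z^4=\frac{1}{4\alpha^2\beta^2}\,\mathcal B_{\mathcal A}\mathbf z^4 .
\]

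\emph{Step 3 (conclusion).} I will take absolute values and maximize over the unit sphere. Since the two feasible sets correspond bijectively under $\mathbf D$, this gives $\rho_B(\mathcal B_{\mathcal A})=\rho_Z(\mathcal B_{\mathcal A})/(4\alpha^2\beta^2)$.

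As a sanity check, I can compute both radii directly. Write $M:=\max\{|\mathcal A\mathbf x\mathbf y\mathbf x\mathbf y|:\|\mathbf x\|=\|\mathbf y\|=1\}$. On the unit sphere, maximizing $\|\mathbf w_x\|^2\|\mathbf w_y\|^2$ subject to $\|\mathbf w_x\|^2+\|\mathbf w_y\|^2=1$ gives $1/4$, so $\rho_Z=M/2$. On the ellipsoid, the same product subject to $2\beta^2\|\mathbf w_x\|^2+2\alpha^2\|\mathbf w_y\|^2=1$ is at most $1/(16\alpha^2\beta^2)$, so $\rho_B=M/(8\alpha^2\beta^2)$. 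The ratio is $1/(4\alpha^2\beta^2)$, as claimed.

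\emph{Main obstacle.} The only delicate point is Step 1: the block bihomogeneity must be verified for $\mathcal B_{\mathcal A}$ itself, which need not be symmetric. This is safe because only the quartic form $\mathcal B_{\mathcal A}\mathbf w^4$ is involved, and \eqref{2.4} confirms it coincides with that of $\bar{\mathcal B}_{\mathcal A}$.
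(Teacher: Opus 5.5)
Your proof is correct, but its main line differs from the paper's. The paper computes the two radii separately. It writes each block in polar form and uses block bihomogeneity to express $\rho_Z(\mathcal B_{\mathcal A})$ and $\rho_B(\mathcal B_{\mathcal A})$ as the same bi-spherical maximum, multiplied by $\max\{r^2s^2: r^2+s^2=1\}=\frac14$ and by $\max\{a^2b^2: 2\beta^2a^2+2\alpha^2b^2=1\}=\frac{1}{16\alpha^2\beta^2}$ respectively, with a separate case for a vanishing block. It then divides, which is exactly your sanity check. Your main argument instead turns the same bihomogeneity into the pointwise identity $\mathcal B_{\mathcal A}(\mathbf D\mathbf z)^4=\frac{1}{4\alpha^2\beta^2}\mathcal B_{\mathcal A}\mathbf z^4$, valid for all $\mathbf z$, and uses that $\mathbf D$ maps the unit sphere bijectively onto the $\mathbf B_{\alpha,\beta}$-ellipsoid. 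This avoids any auxiliary constrained maximization and the zero-block case split. It also avoids checking that the bound on $a^2b^2$ is attained; your sanity check, written with ``at most'', tacitly needs this, and it holds at $2\beta^2a^2=2\alpha^2b^2=\frac12$. Your route actually gives more than the lemma. Since $\mathbf D^{-2}=\mathbf B_{\alpha,\beta}$, differentiating the identity shows that $\mathbf z\mapsto\mathbf D\mathbf z$ sends every $Z$-eigenpair $(\mu,\mathbf z)$ of $\bar{\mathcal B}_{\mathcal A}$ to the $\mathbf B_{\alpha,\beta}$-eigenpair $\bigl(\mu/(4\alpha^2\beta^2),\mathbf D\mathbf z\bigr)$, so all critical values scale by the same factor, not just the spectral radius. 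What the paper's route buys is the explicit values $\rho_Z=\frac12 M$ and $\rho_B=\frac{1}{8\alpha^2\beta^2}M$, with $M=\max_{\|\mathbf x\|=\|\mathbf y\|=1}|\mathcal A\mathbf x\mathbf y\mathbf x\mathbf y|$; this is what ties the radii to $\lambda^M_{\max}$ and $\lambda^M_{\min}$ in Corollary~\ref{co3.2}. Your Step 1, $\mathcal B_{\mathcal A}\mathbf w^4=2\mathcal A\mathbf w_x\mathbf w_y\mathbf w_x\mathbf w_y$, also makes precise the paper's loosely written quantity $\mathcal B_{\mathcal A}\boldsymbol\xi\boldsymbol\eta\boldsymbol\xi\boldsymbol\eta$. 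Your closing appeal to \eqref{2.4} is unnecessary, since \eqref{2.2} already concerns the unsymmetrized $\mathcal B_{\mathcal A}$.
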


\proof Let $\mathbf z=(\mathbf p^\top,\mathbf q^\top)^\top$ with $\|\mathbf z\|=1$,
write $\mathbf p=r\boldsymbol\xi$ and $\mathbf q=s\boldsymbol\eta$, where
$r,s\ge 0$ and $\|\boldsymbol\xi\|=\|\boldsymbol\eta\|=1$ whenever
$\mathbf p, \mathbf q \neq \bf 0$. If $\mathbf p=\bf 0$ or $\mathbf q=\bf 0$, then clearly $\mathcal B_{\mathcal A}\mathbf z^4=0$, and the claimed identity holds immediately. Thus, it is
sufficient to consider the case $\mathbf p\neq \bf 0$ and $\mathbf q\neq \bf 0$.
Since $\mathcal B_{\mathcal A}$ is biquadratic with respect to the two blocks, we have
$$
\mathcal B_{\mathcal A}\mathbf z^4=r^2s^2\mathcal B_{\mathcal A}\boldsymbol\xi\boldsymbol\eta\boldsymbol\xi\boldsymbol\eta .
$$
Together with $\|\mathbf z\|=1$, we have $r^2+s^2=1$. Thus
$$
\max_{r^2+s^2=1} r^2s^2=\frac14.
$$
It follows that
$$
\rho_Z(\mathcal B_{\mathcal A})
=\frac14
\max_{\|\boldsymbol\xi\|=\|\boldsymbol\eta\|=1}
|\mathcal B_{\mathcal A}\boldsymbol\xi\boldsymbol\eta\boldsymbol\xi\boldsymbol\eta|.
$$

Similarly, let $\mathbf w=(\mathbf u^\top,\mathbf v^\top)^\top$ satisfying
$\mathbf w^\top\mathbf B_{\alpha,\beta}\mathbf w=1$, write
$\mathbf u=a\boldsymbol\xi$ and $\mathbf v=b\boldsymbol\eta$, where
$a,b\geq 0$ and $\|\boldsymbol\xi\|=\|\boldsymbol\eta\|=1$ whenever
$\mathbf u, \mathbf v\neq \bf 0$.  If $\mathbf u=0$ or $\mathbf v=0$, then
$\mathcal B_{\mathcal A}\mathbf w^4=0$. Thus, we only need to consider the case
$\mathbf u\neq0$ and $\mathbf v\neq0$. By the definition of
$\mathbf B_{\alpha,\beta}$, we have
$$
2\beta^2a^2+2\alpha^2b^2=1,
$$
Moreover, by the biquadratic block structure of $\mathcal B_{\mathcal A}$, we obtain
\[
\mathcal B_{\mathcal A}\mathbf w^4
=
a^2b^2
\mathcal B_{\mathcal A}\boldsymbol\xi\boldsymbol\eta\boldsymbol\xi\boldsymbol\eta.
\]
Since
\[
\max_{2\beta^2a^2+2\alpha^2b^2=1}a^2b^2
=
\frac{1}{16\alpha^2\beta^2},
\]
where the maximum is attained at
$
2\beta^2a^2=2\alpha^2b^2=\frac12
$.
Therefore,
$$
\rho_{B}(\mathcal B_{\mathcal A})=\frac{1}{16\alpha^2\beta^2}
\max_{\|\boldsymbol\xi\|=\|\boldsymbol\eta\|=1}
|\mathcal B_{\mathcal A}\boldsymbol\xi\boldsymbol\eta\boldsymbol\xi\boldsymbol\eta|.
$$
Combining the above two identities gives
$$
\rho_{B}(\mathcal B_{\mathcal A})
=
\frac{1}{4\alpha^2\beta^2}\rho_Z(\mathcal B_{\mathcal A}).
$$

\qed

\begin{corollary}\label{co3.2}
Let $\mathcal A,\mathcal E\in \mathbb P^{m\times n\times m\times n}$, and set
$\widetilde{\mathcal A}:=\mathcal A+\mathcal E$.
Then
$$|\lambda_{M\max}(\tilde{\mathcal{A}}) - \lambda_{M\max}(\mathcal{A})| \le 8\alpha^2\beta^2 \rho_B(\mathcal{B}_{\mathcal{E}})= 2\rho_Z(\mathcal{B}_{\mathcal{E}}).$$
\end{corollary}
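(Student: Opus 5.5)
From Theorem~\ref{th3.1}, subtracting $\lambda_{\max}^{M}(\mathcal A)$ gives
\[
\lambda_{\min}^{M}(\mathcal E)\le \lambda_{\max}^{M}(\widetilde{\mathcal A})-\lambda_{\max}^{M}(\mathcal A)\le \lambda_{\max}^{M}(\mathcal E).
\]
Since $\lambda_{\min}^{M}(\mathcal E)\ge -\max_{\|\x\|=\|\y\|=1}|\mathcal E\x\y\x\y|$ and $\lambda_{\max}^{M}(\mathcal E)\le \max_{\|\x\|=\|\y\|=1}|\mathcal E\x\y\x\y|$, we obtain
\[
|\lambda_{\max}^{M}(\widetilde{\mathcal A})-\lambda_{\max}^{M}(\mathcal A)|\le \rho_M(\mathcal E):=\max_{\|\x\|=\|\y\|=1}|\mathcal E\x\y\x\y|.
\]
So the first step is this reduction to the ``$M$-spectral radius'' of $\mathcal E$.

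The second step expresses $\rho_M(\mathcal E)$ through the lifted tensor $\mathcal B_{\mathcal E}$. By \eqref{2.2}, for $\w=(\mathbf u^\top,\mathbf v^\top)^\top$ we have
\[
\mathcal B_{\mathcal E}\w^4=\mathbf u^\top(\mathcal E\cdot\mathbf v\mathbf u\mathbf v)+\mathbf v^\top(\mathcal E\mathbf u\mathbf v\mathbf u\cdot)=2\,\mathcal E\mathbf u\mathbf v\mathbf u\mathbf v.
\]
The factor $2$ must be tracked carefully: the notation $\mathcal B_{\mathcal E}\boldsymbol\xi\boldsymbol\eta\boldsymbol\xi\boldsymbol\eta$ in the proof of Lemma~\ref{lem3.1} should be read as the full quartic form of the lifted tensor at $(\boldsymbol\xi^\top,\boldsymbol\eta^\top)^\top$, which equals $2\,\mathcal E\boldsymbol\xi\boldsymbol\eta\boldsymbol\xi\boldsymbol\eta$.

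With the substitutions $\mathbf u=a\boldsymbol\xi$, $\mathbf v=b\boldsymbol\eta$ and the constrained maximization of $a^2b^2$ already carried out in the proof of Lemma~\ref{lem3.1}, this gives
\[
\rho_B(\mathcal B_{\mathcal E})=\frac{2}{16\alpha^2\beta^2}\rho_M(\mathcal E),\qquad \rho_Z(\mathcal B_{\mathcal E})=\frac{2}{4}\rho_M(\mathcal E).
\]
Consequently,
\[
8\alpha^2\beta^2\rho_B(\mathcal B_{\mathcal E})=\rho_M(\mathcal E)=2\rho_Z(\mathcal B_{\mathcal E}),
\]
and the second equality is exactly Lemma~\ref{lem3.1} multiplied by $8\alpha^2\beta^2$.

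Combining the two steps yields the claim. The main obstacle is the normalization bookkeeping: if the factor $2$ in $\mathcal B_{\mathcal E}\w^4=2\,\mathcal E\mathbf u\mathbf v\mathbf u\mathbf v$ is dropped, the constants come out off by a factor of two. I would therefore verify the identity directly from \eqref{2.1}. The check is that the two nonzero index patterns of $\mathcal B_{\mathcal E}$ contribute $\mathbf u^\top(\mathcal E\cdot\mathbf v\mathbf u\mathbf v)$ and $\mathbf v^\top(\mathcal E\mathbf u\mathbf v\mathbf u\cdot)$, and both of these equal $\mathcal E\mathbf u\mathbf v\mathbf u\mathbf v$ by partial symmetry. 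Only after confirming this would I invoke the optimization values $\tfrac14$ and $\tfrac{1}{16\alpha^2\beta^2}$. The degenerate cases $\mathbf u=\mathbf 0$ or $\mathbf v=\mathbf 0$ give a value of zero and do not affect either maximum.
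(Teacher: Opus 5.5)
Your argument is correct, and it differs from the paper's in how it links the perturbation interval to $\rho_B(\mathcal B_{\mathcal E})$.

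The paper starts from Corollary~\ref{co3.1}. It therefore passes through the eigenpair correspondence of Section~2 and expresses the interval via the smallest and largest recoverable $\mathbf B_{\alpha,\beta}$-eigenvalues of the lifted perturbation. It then asserts without argument that $\max\{|\lambda^B_{\min}(\mathcal B_{\mathcal E})|,|\lambda^B_{\max}(\mathcal B_{\mathcal E})|\}=\rho_B(\mathcal B_{\mathcal E})$ before applying Lemma~\ref{lem3.1}.

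You bypass the lifted eigenvalues altogether. You read off the bound $\rho_M(\mathcal E)$ directly from Theorem~\ref{th3.1}. You then evaluate $\rho_B$ and $\rho_Z$ variationally, using $\mathcal B_{\mathcal E}\w^4=2\,\mathcal E\mathbf u\mathbf v\mathbf u\mathbf v$ and the separated maximization over $(a,b)$ and $(\boldsymbol\xi,\boldsymbol\eta)$.

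Your route is self-contained and avoids any discussion of which lifted eigenpairs are recoverable. It also supplies the step the paper leaves implicit: the identity $8\alpha^2\beta^2\rho_B(\mathcal B_{\mathcal E})=\rho_M(\mathcal E)=\max\{|\lambda^M_{\min}(\mathcal E)|,|\lambda^M_{\max}(\mathcal E)|\}$ is precisely what makes the paper's equality hold. The paper's route, in exchange, presents the bound as a statement about lifted eigenvalues, which matches its ``no additional relaxation'' theme.

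Your tracking of the factor $2$ is right. It cancels in the ratio of Lemma~\ref{lem3.1}, which is why that lemma is unaffected. One small inaccuracy: $\mathbf u^\top(\mathcal E\cdot\mathbf v\mathbf u\mathbf v)$ and $\mathbf v^\top(\mathcal E\mathbf u\mathbf v\mathbf u\cdot)$ equal $\mathcal E\mathbf u\mathbf v\mathbf u\mathbf v$ straight from the definitions of the contractions, so partial symmetry is not needed there.
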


\proof
It follows from Corollary \ref{co3.1} that
$$
\begin{aligned}
\bigl|
\lambda_{M\max}(\tilde{\mathcal A})-\lambda_{M\max}(\mathcal A)
\bigr|
&\le
8\alpha^2\beta^2
\max\left\{
|\lambda_{B\min}(\mathcal B_{\mathcal E})|,
|\lambda_{B\max}(\mathcal B_{\mathcal E})|
\right\}\\
&=8\alpha^2\beta^2\rho_B(\mathcal B_{\mathcal E}).
\end{aligned}
$$
Applying Lemma~\ref{lem3.1} to the perturbation tensor \(\mathcal E\), we obtain
$$
8\alpha^2\beta^2\rho_B(\mathcal B_{\mathcal E})
=2\rho_Z(\mathcal B_{\mathcal E}),
$$
which completes the proof.\qed

\begin{Remark}
When $\alpha=\beta=\frac{1}{\sqrt2}$, one has $\mathbf B_{\alpha,\beta}=\mathbf I_{m+n}$. Consequently, the proposed $\mathbf B_{\alpha,\beta}$-eigenvalue formulation reduces to the standard $Z$-eigenvalue formulation. In this sense, the $Z$-eigenvalue lifting is a special case of the present framework. Moreover, Corollary \ref{3.2} shows that the parameterized lifting framework preserves the absolute perturbation bound exactly after the prescribed scaling and introduces no additional relaxation.
\end{Remark}

\section{Numerical experiments}

In this section, we construct several numerical examples to evaluate the performance of the lifted $\mathbf B_{\alpha,\beta}$-eigenpair method in Algorithm~\ref{alg1}. The first two examples illustrate the recovery of the complete real $M$-spectrum, the next two examine the effects
of the normalization parameters and tensor dimensions, and the final example illustrates the perturbation bounds established in Section~3.


All numerical experiments were implemented in MATLAB 9.0 on a personal computer with AMD Ryzen 7 4800H CPU 2.90GHz and 16 GB random-access memory (RAM).

To measure the accuracy of a computed $M$-eigenpair $(\lambda, \mathbf{x}, \mathbf{y})$ for a given tensor $\mathcal{A}$, we define the residual norm (Res) as follows:
\begin{equation} \label{4.1}
\text{Res} := \sqrt{\|\mathcal{A} \cdot \mathbf{yxy} - \lambda \mathbf{x}\|^2 + \|\mathcal{A} \mathbf{xyx} \cdot - \lambda \mathbf{y}\|^2}.
\end{equation}

In Examples~\ref{ex1} and~\ref{ex2}, set $\alpha=\beta=\frac{1}{\sqrt{2}}$,
so that $\mathbf B_{\alpha,\beta}=\mathbf I_{m+n}$. Consequently, the generalized eigenvalue problem for the symmetric lifted tensor reduces to the standard $Z$-eigenvalue problem. Several numerical methods are available for computing $Z$-eigenpairs of symmetric tensors; see, for example, \cite{CHZ16,CH22,LZ16,MW20}. In these two examples, we employ the method proposed in
\cite{CHZ16} to compute the real $Z$-eigenpairs of $\bar{\mathcal B}_{\mathcal A}$ and then recover the corresponding $M$-eigenpairs.

In the subsequent experiments, consider nonstandard positive choices of $\alpha$ and $\beta$ to investigate the parameterized formulation and to verify the perturbation estimates numerically.
For these cases, we directly solve the resulting generalized tensor eigenvalue problems using the method developed in \cite{CHZ16}. The computed real $\mathbf B_{\alpha,\beta}$-eigenpairs of the
corresponding symmetric lifted tensors are then converted into $M$-eigenpairs by the recovery procedure.


\begin{example}\label{ex1}
Let $\mathcal{A}= (a_{ijkl}) \in \mathbb{P}^{2\times2\times2\times 2}$, whose non-zero entries are
\[
\begin{aligned}
a_{1111} &= 1, \quad a_{1112}=a_{1211} = 2, \quad a_{1121}=a_{2111} = 2, \quad a_{1122}=a_{2112}=a_{1221}=a_{2211} = 4,\\
a_{1212} &= 3, \quad a_{1222}=a_{2212}= 5, \quad a_{2122}=a_{2221} = 5, \quad a_{2121} = 3, \quad a_{2222} = 6,
\end{aligned}
\]
and $a_{ijkl} = 0$ otherwise.
\end{example}

The results in \cite{QDH09} show that this tensor has six different $M$-eigenvalues: $-1.2764$, $0.0710$, $0.1242$, $0.2765$, $0.3437$ and $15.2091$.

To verify the effectiveness of our approach, we employ Algorithm \ref{alg1} to compute the $M$-eigenpairs of this tensor. Throughout this example, we set
$\alpha=\beta=\frac{1}{\sqrt{2}}$, so that
$\mathbf B_{\alpha,\beta}=\mathbf I_4$ and the generalized eigenvalue problem reduces to a standard $Z$-eigenvalue problem.

Let $\mathbf0_2$ denote the $2\times2$ zero matrix. For each $i,j\in[2]$, define
$\mathbf H_{2+j,i}\in\mathbb R^{2\times2}$ by
\begin{equation}\label{eq:H-example41}
\bigl(\mathbf H_{2+j,i}\bigr)_{qp}
:=
a_{pjiq},
\qquad p,q\in[2].
\end{equation}
Then the nonzero slices of
$\mathcal B_{\mathcal A}\in\mathbb R^{[4,4]}$
have the block representation
\begin{equation}\label{eq:lifted-block-ex41}
\mathcal B_{\mathcal A}(:,:,k,l)
=
\begin{cases}
\begin{pmatrix}
\mathbf0_2 & \mathbf0_2\\
\mathbf H_{kl} & \mathbf0_2
\end{pmatrix},
&
k\in\{3,4\},\quad l\in\{1,2\},\\[5mm]
\begin{pmatrix}
\mathbf0_2 & \mathbf H_{kl}\\
\mathbf0_2 & \mathbf0_2
\end{pmatrix},
&
k\in\{1,2\},\quad l\in\{3,4\},\\[5mm]
\mathbf0_{4\times4},
&\text{otherwise}.
\end{cases}
\end{equation}

The nonzero lower-left blocks in
\eqref{eq:lifted-block-ex41} are
\[
\mathbf H_{31}
=
\begin{pmatrix}
1&2\\
2&4
\end{pmatrix},
\qquad
\mathbf H_{41}
=
\begin{pmatrix}
2&4\\
3&5
\end{pmatrix},
\]
and
\[
\mathbf H_{32}
=
\begin{pmatrix}
2&3\\
4&5
\end{pmatrix},
\qquad
\mathbf H_{42}
=
\begin{pmatrix}
4&5\\
5&6
\end{pmatrix}.
\]
The upper-right blocks satisfy
\[
\mathbf H_{lk}=\mathbf H_{kl}^{\top},
\qquad
k\in\{3,4\},\quad l\in\{1,2\}.
\]

The symmetric lifted tensor $\bar{\mathcal B}_{\mathcal A}$ is obtained from
$\mathcal B_{\mathcal A}$ by the symmetrization procedure \eqref{2.3}. Since $\alpha=\beta=1/\sqrt{2}$, we compute the real $Z$-eigenpairs of
$\bar{\mathcal B}_{\mathcal A}$ and recover the corresponding $M$-eigenpairs. The detailed results are summarized in Table \ref{tab1}.

\begin{table}[h]
\centering
\caption{$M$-eigenvalues and corresponding eigenvectors of Example \ref{ex1}.}
\label{tab1}
\renewcommand{\arraystretch}{1.0}
\begin{tabular}{c c l l}
\hline
\textbf{No.} & \textbf{M-eigenvalue} & \textbf{Eigenvector(s)} $\mathbf{x}^\top$ & \textbf{Eigenvector(s)} $\mathbf{y}^\top$ \\
\hline
1 & $-1.2764$ & \makecell[l]{$(-0.8168, 0.5769)$ \\ $(0.5894, 0.8079)$} & \makecell[l]{$(0.5894, 0.8079)$ \\ $(-0.8168, 0.5769)$} \\
2 & $0.0710$  & $(-0.9639, 0.2664)$ & $(-0.9639, 0.2664)$ \\
3 & $0.1242$  & $(-0.5774, 0.8165)$ & $(-0.5774, 0.8165)$ \\
4 & $0.2765$  & \makecell[l]{$(-0.9326, 0.3609)$ \\ $(-0.6402, 0.7682)$} & \makecell[l]{$(-0.6402, 0.7682)$ \\ $(-0.9326, 0.3609)$} \\
\hline
\end{tabular}
\end{table}

As observed from Table \ref{tab1}, Algorithm \ref{alg1} successfully identifies all six real $M$-eigenvalues. Furthermore, our computed eigenvalues are in agreement with the results reported in \cite{QDH09}, which validates the numerical accuracy of the proposed approach. These numerical results also confirm that the theoretical analysis is correct and the proposed algorithm is feasible.

\begin{example}\label{ex2}
Consider the tensor $\mathcal{A} \in \mathbb{P}^{3 \times 3 \times 3 \times 3}$associated with the elastic moduli of trigonal $\mathrm{CaMg(CO_3)_2}$ dolomite reported in \cite{LLL19}. Its independent nonzero entries are given by
\begin{align*}
a_{1111} &= a_{2222} = 196.6,\; a_{2233} = a_{3311} = 83.2,\; a_{1313} = a_{2323} = a_{3131} = a_{3232} = 54.7, \\
a_{2223} &= a_{2232} = -a_{1213} = -a_{2131} = -31.7,\; a_{3333} = 110,\; a_{1212} = a_{2121} = 64.4, \\
a_{1232} &= a_{2321} = -a_{1131} = -a_{1311} = -25.3,\; a_{1321} = a_{3112} = 44.8, \\
a_{1223} &= a_{2132} = -35.84,\; a_{1122} = 132.2,
\end{align*}
and $a_{ijkl} = 0$ otherwise.
\end{example}

To evaluate our approach on this practical application, we employ Algorithm \ref{alg1}. Initially, we construct the lifted tensor $\mathcal{B}_{\mathcal{A}} \in \mathbb{R}^{6\times6\times6\times6}$, which possesses a sparse structure that can be expressed via a $3 \times 3$ block partitioning. Let $\mathbf{0}_3$ be the $3 \times 3$ zero matrix and $\mathbf{H}_{kl} \in \mathbb{R}^{3\times3}$ represent the nonzero sub-blocks. The block representation of the slices $\mathcal{B}_{\mathcal{A}}(:,:,k,l)$ is given as follows

\[
\mathcal{B}_{\mathcal{A}}(:,:,k,l) =
\begin{cases}
    \begin{pmatrix} \mathbf{0}_3 & \mathbf{0}_3 \\ \mathbf{H}_{kl} & \mathbf{0}_3 \end{pmatrix}, & \text{if } l \in \{1,2,3\} \text{ and } k \in \{4,5,6\}, \\[15pt]
    \begin{pmatrix} \mathbf{0}_3 & \mathbf{H}_{kl} \\ \mathbf{0}_3 & \mathbf{0}_3 \end{pmatrix}, & \text{if } l \in \{4,5,6\} \text{ and } k \in \{1,2,3\}, \\[15pt]
    ~~~\mathbf{0}_{6 \times 6}, & \text{otherwise},
\end{cases}
\]
where the non-zero sub-blocks $\mathbf{H}_{kl}$ satisfy $\mathbf{H}_{lk} = \mathbf{H}_{kl}^\top$. Their values for the indices $l \in \{1,2,3\}$ and $k \in \{4,5,6\}$ are listed below.

{\footnotesize 
\begin{align*}
\mathbf{H}_{41} &= \begin{pmatrix}
196.6 & 0 & 25.3 \\
0 & 132.2 & 44.8 \\
25.3 & 44.8 & 83.2
\end{pmatrix}, &
\mathbf{H}_{51} &= \begin{pmatrix}
0 & 132.2  & 44.8  \\
64.4  & 0&  -25.3  \\
31.7  & -35.84  & 0
\end{pmatrix}, &
\mathbf{H}_{61} &= \begin{pmatrix}
25.3 & 44.8  &83.2\\
31.7 &-35.84 &0\\
54.7 &   0   &0
\end{pmatrix}. \\[10pt] 
\mathbf{H}_{42} &= \begin{pmatrix}
0 & 64.4 & 31.7 \\
132.2 & 0 & -35.84 \\
44.8 & -25.3 & 0
\end{pmatrix}, &
\mathbf{H}_{52} &= \begin{pmatrix}
132.2 & 0 & -35.84 \\
0 & 196.6 & -31.7\\
-35.84 & -31.7 & 83.2
\end{pmatrix},&
\mathbf{H}_{62} &= \begin{pmatrix}
44.8 & -25.3 & 0 \\
-35.84 & -31.7 &83.2 \\
0 & 54.7 & 0
\end{pmatrix}. \\[10pt]
\mathbf{H}_{43} &= \begin{pmatrix}
25.3 & 31.7 & 54.7 \\
44.8 & -35.84 & 0 \\
83.2  & 0 & 0
\end{pmatrix}, &
\mathbf{H}_{53} &= \begin{pmatrix}
44.8 & -35.84 & 0 \\
-25.3 & -31.7 &54.7 \\
0 &83.2 &0
\end{pmatrix}, &
\mathbf{H}_{63} &= \begin{pmatrix}
83.2 & 0 & 0 \\
0 & 83.2 & 0 \\
0 &0 & 110
\end{pmatrix}.
\end{align*}
}

For the remaining cases where $l \in \{4, 5, 6\}$ and $k \in \{1, 2, 3\}$, the nonzero sub-blocks are located in the upper-right position and are determined by $\mathbf{H}_{lk} = \mathbf{H}_{kl}^\top$. The detailed results are summarized in Table \ref{tab2}.


\begin{table}[!ht]
    \centering
    \caption{$M$-eigenvalues and corresponding eigenvectors of Example \ref{ex2}.}
    \label{tab2}
    \renewcommand{\arraystretch}{0.7}
    \begin{tabular}{lllll}
    \hline
        \textbf{No.} & \textbf{M-eigenvalue} & ~ \textbf{Eigenvector(s) $\mathbf{x}^\top$}  & ~ \textbf{Eigenvector(s) $\mathbf{y}^\top$}   \\ \hline
        1 & $-37.3664$  &\makecell[l]{$\pm$( -0.7072, -0.7055, -0.0463)\\ $\pm$(-0.6107, 0.5913,  0.5267)}&  \makecell[l]{$\pm$(-0.6107, 0.5913, 0.5267 )\\ $\pm$(-0.7072, -0.7055, -0.0463)} \\
        2 & 14.5704  &\makecell[l]{$\pm$( -0.6208, 0.5692, 0.5391)\\ $\pm$( -0.2833, 0.3346,  -0.8988)}& \makecell[l]{$\pm$( -0.2833, 0.3346, -0.8988)\\$\pm$(-0.6208, 0.5692, 0.5391)} \\
        3& 15.8605  &\makecell[l]{$\pm$(-0.3304, -0.4095, -0.8504)\\ $\pm$(-0.0383, 0.8731,   -0.4859)} &\makecell[l]{$\pm$( -0.0383, 0.8731, -0.4859  )\\$\pm$( -0.3304, -0.4095,   -0.8504)} \\
        4 & 18.3959  &\makecell[l]{$\pm$( -0.8957, 0.0044, -0.4447)\\ $\pm$(-0.3516, -0.3897,   0.8512)}&\makecell[l]{ $\pm$( -0.3516, -0.3897, 0.8512)\\$\pm$( -0.8957, 0.0044, -0.4447  )} \\
        5 & 23.9870  & \makecell[l]{$\pm$( -0.8455, -0.1045, -0.5237)\\ $\pm$( -0.0671, -0.8071,   0.5866)} &\makecell[l]{$\pm$(-0.0671, -0.8071, 0.5866)\\$\pm$( -0.8455, -0.1045, -0.5237 )} \\
        6 & 59.1027  &\makecell[l]{ $\pm$(-0.1834, -0.9311, -0.3152)\\$\pm$(-0.0361, -0.0931,   0.9950)}&\makecell[l]{ $\pm$( -0.0361, -0.0931, 0.9950)\\$\pm$( -0.1834, -0.9311,-0.3152  )} \\
        7& 59.4996  &\makecell[l]{ $\pm$( -0.9114, -0.2454, 0.3304)\\$\pm$(-0.0932, -0.0478,   -0.9945)} &\makecell[l]{$\pm$(-0.0932, -0.0478, -0.9945)\\$\pm$( -0.9114, -0.2454, 0.3304  )} \\
        8 & 62.2588  &\makecell[l]{ $\pm$( -0.6709, 0.6157, -0.4133)\\$\pm$(-0.0880, 0.0869,   0.9923)}&\makecell[l]{$\pm$(-0.0880, 0.0869, 0.9923)\\$\pm$(-0.6709, 0.6157, -0.4133)} \\
        9 & 69.9094 & \makecell[l]{$\pm$( -0.8863, -0.3057, 0.3478)\\ $\pm$(-0.5150, 0.6416,   -0.5684)} &\makecell[l]{ $\pm$(-0.5150, 0.6416, -0.5684)\\$\pm$(-0.8863, -0.3057, 0.3478  )} \\
        10 & 73.1065  &\makecell[l]{ $\pm$(-0.6885, 0.4545, -0.5651)\\$\pm$(-0.2551, -0.9180,   -0.3038)} & \makecell[l]{$\pm$( -0.2551, -0.9180, -0.3038)\\$\pm$(-0.6885, 0.4545, -0.5651  )} \\
        11 & 88.8932 & \makecell[l]{$\pm$(-0.9224, -0.1251, 0.3654)\\$\pm$(-0.0650, -0.9470,   -0.3146)} & \makecell[l]{$\pm$(-0.0650, -0.9470, -0.3146) \\$\pm$( -0.9224, -0.1251,   0.3654)} \\
        12 & 90.4814  &\makecell[l]{$\pm$(-0.6712, -0.7411, 0.0160)\\ $\pm$(-0.3926, 0.3561,   -0.8480)} &\makecell[l] {$\pm$(-0.3926, 0.3561, -0.8480) \\ $\pm$( -0.6712, -0.7411, 0.0160  )} \\
        13 & 97.1101  &\makecell[l]{$\pm$( -0.6743, -0.1634, 0.7201)\\ $\pm$( -0.2116, 0.9754,   -0.0624)}&\makecell[l]{$\pm$( -0.2116, 0.9754, -0.0624)\\ $\pm$(-0.6743, -0.1634, 0.7201  )} \\
        14 & 98.4839 &\makecell[l]{$\pm$( -0.9872, 0.1525, -0.0472)\\$\pm$( -0.1248, -0.7070,   -0.6961)}&\makecell[l]{ $\pm$( -0.1248, -0.7070, -0.6961)\\$\pm$( -0.9872, 0.1525, -0.0472  )} \\
        15 & 110.0000  & $\pm$( 0.0000   0.0000   -1.0000  ) & $\pm$( 0.0000   0.0000   -1.0000  ) \\
        16 & 205.5600  & $\pm$( -0.9698   0.1913   -0.1513  ) & $\pm$( -0.9698   0.1913   -0.1513  ) \\
        17 & 211.9361  & $\pm$( -0.2466   0.9513   -0.1851  ) & $\pm$( -0.2466   0.9513   -0.1851  ) \\
        18 & 261.9312  & $\pm$( -0.6593   -0.7494   0.0604  ) & $\pm$( -0.6593   -0.7494   0.0604  ) \\
        19 & 272.3828  & $\pm$( -0.3735   -0.8344   0.4054  ) & $\pm$( -0.3735   -0.8344   0.4054  ) \\
        20 & 281.8873  & $\pm$( -0.8111   -0.4324   -0.3939  ) & $\pm$( -0.8111   -0.4324   -0.3939  ) \\
        21 & 318.0790  & $\pm$( -0.6638   0.6374   0.3913  ) & $\pm$( -0.6638   0.6374   0.3913  ) \\ \hline
    \end{tabular}
\end{table}

Table~\ref{tab2} lists the 21 distinct nonzero real $M$-eigenvalues obtained by the proposed method. In addition, a recoverable zero eigenpair with two nonzero vector blocks was found, showing that zero is also an $M$-eigenvalue of $\mathcal A$. Thus, the computation returns 22 distinct real
$M$-eigenvalues in total. However, $\lambda = 0$ possesses a large number of corresponding eigenvectors. Therefore, for the sake of brevity, the zero eigenvalue and its associated eigenvectors are excluded from Table \ref{tab2}, which exclusively presents the 21 nonzero $M$-eigenpairs.

The largest computed $M$-eigenvalue is $318.0790$, in agreement with the reference result in \cite{LLL19}. The smallest $M$-eigenvalue is $-37.3664$, and therefore the associated elasticity tensor fails the strong ellipticity condition.

\begin{example}\label{ex3}
Consider the tensor $\mathcal{A}\in\mathbb{R}^{4\times 3\times4\times3}$ whose entries are randomly generated from the standard normal distribution $ N(0,1)$.
\end{example}

This example is used to investigate the numerical effect of the normalization parameters $\alpha$ and $\beta$. In particular, we compare the standard $Z$-eigenvalue formulation with the proposed
$\mathbf B_{\alpha,\beta}$-eigenvalue formulation for several representative positive choices of $(\alpha,\beta)$. The standard $Z$-eigenvalue formulation corresponds to
\[
\alpha=\beta=\frac{1}{\sqrt{2}},
\qquad
\mathbf B_{\alpha,\beta}=\mathbf I_{7}.
\]

For a consistent comparison, all parameter choices, including the standard $Z$ case, are treated using the generalized tensor eigenvalue method developed in \cite{CHZ16}. For each
$(\alpha,\beta)$, we compute the real $\mathbf B_{\alpha,\beta}$-eigenpairs of
$\bar{\mathcal B}_{\mathcal A}$ and recover the corresponding $M$-eigenpairs.

For each case, the computed generalized eigenpairs are recovered as $M$-eigenpairs, and the residuals are evaluated by \eqref{4.1}. In Table~\ref{tab3}, NE denotes the number of computed distinct real $M$-eigenvalues, CPU denotes the computational time in seconds,
$\mathrm{Res}_{\max}$ denotes the maximum residual, and $\mathrm{Res}_{\mathrm{mean}}$ denotes the mean residual.
%


\begin{table}[htbp]
\centering
\caption{Comparison of different normalization parameters
$\alpha$ and $\beta$ in Example \ref{ex3}.}
\label{tab3}
\begin{tabular}{cccccc}
\hline
$\alpha$ & $\beta$ & NE & CPU(s) & $\mathrm{Res}_{\max}$ & $\mathrm{Res}_{\mathrm{mean}}$ \\
\hline
 $\frac{1}{\sqrt{2}}$ & $\frac{1}{\sqrt{2}}$ & 37 & 36.82 & $1.01\times 10^{-15}$ & $5.83\times 10^{-16}$ \\
 0.5 & 1.0 & 37 & 35.67 & $1.49\times 10^{-15}$ & $6.13\times 10^{-16}$ \\
 1.0 & 0.5 & 37 & 35.22 & $1.20\times 10^{-15}$ & $6.19\times 10^{-16}$ \\
 0.5 & 0.5 & 37 & 35.01 & $3.06\times 10^{-15}$ & $5.80\times 10^{-16}$ \\
 1.0 & 1.0 & 37 & 35.02 & $1.14\times 10^{-15}$ & $6.15\times 10^{-16}$ \\
 1.5 & 0.75 & 37 & 35.36 & $1.34\times 10^{-15}$ & $6.51\times 10^{-16}$ \\
\hline
\end{tabular}
\end{table}

From Table~\ref{tab3}, all tested choices of $(\alpha,\beta)$ recover 37 distinct real $M$-eigenvalues. The maximum and mean residuals remain at the level of $10^{-15}$, which confirms the numerical accuracy of the computed $M$-eigenpairs. Moreover,
several choices of $(\alpha,\beta)$ require slightly less CPU time than the standard $Z$ case. This indicates that the parameterized $\mathbf B_{\alpha,\beta}$ formulation provides additional flexibility in practical computations without
loss of accuracy.

\begin{example}\label{ex4}
Consider the tensor $\mathcal{A}\in\mathbb{R}^{m\times n\times m\times n}$ whose entries are uniformly generated in $(0,1)$, i.e., $\mathcal{A}\sim U(0,1)$.
\end{example}

To examine the scalability of the proposed formulation, we generate random fourth order partially symmetric tensors with different dimension pairs $(m,n)$. For each dimension pair, 50 independent random tensors are generated.  The independent entries are sampled from the uniform
distribution, and the remaining entries are obtained according to the hierarchical symmetry relations.

For a fair comparison, the same random tensors are used for the standard $Z$ formulation and the
$\mathbf B_{\alpha,\beta}$ formulation. The standard $Z$ formulation corresponds to the
special choice $\alpha=\beta=1/\sqrt{2}$. For the $\mathbf B_{\alpha,\beta}$ formulation, several
representative parameter choices are tested. The parameters are fixed in advance and kept unchanged for all random instances of the same dimension.

For each case, the computed generalized eigenpairs are recovered as $M$-eigenpairs,
and the residuals are evaluated by \eqref{4.1}. In Table~\ref{tab4}, NE denotes the average number of computed distinct real $M$-eigenvalues, $\overline{\mathrm{CPU}}_Z$ and $\overline{\mathrm{CPU}}_B$ denote the average CPU times over 50 trials, and $\mathrm Res_Z$ and $\mathrm Res_B$ denote the corresponding mean residuals. The CPU ratio is defined by
\[
\mathrm{Ratio}
=
\frac{\overline{\mathrm{CPU}}_Z}
{\overline{\mathrm{CPU}}_B}.
\]
A value of $\mathrm{Ratio}>1$ indicates that the $\mathbf B_{\alpha,\beta}$ formulation requires less average CPU time than the standard $Z$ formulation.
\begin{table}[htbp]
\centering
\caption{Comparison of the standard $Z$ formulation and the
$\mathbf B_{\alpha,\beta}$ formulation across varying dimensions in Example 4.4.}
\label{tab4}
\renewcommand{\arraystretch}{1.05}
\resizebox{\textwidth}{!}{
\begin{tabular}{cccccccccccc}
\hline
$m$ & $n$ & $m+n$ & $\alpha$ & $\beta$
& $\mathrm{NE}_Z$ & $\mathrm{NE}_B$
& $\overline{\mathrm{CPU}}_Z$ & $\overline{\mathrm{CPU}}_B$
& Ratio & $\mathrm{Res}_Z$ & $\mathrm{Res}_B$ \\
\hline
2 & 2 & 4 & 0.4 & 0.8
& 7 & 7 & 0.791 & 0.263 & 3.006
& $5.04\times10^{-16}$ & $4.16\times10^{-16}$ \\
2 & 3 & 5 & 0.2 & 0.4
& 13 & 13 & 1.428 & 1.238 & 1.153
& $5.11\times10^{-16}$ & $4.54\times10^{-16}$ \\
2 & 4 & 6 & 0.2 & 1.0
& 21 & 21 & 6.356 & 5.671 & 1.121
& $4.67\times10^{-16}$ & $4.40\times10^{-16}$ \\
3 & 2 & 5 & 0.8 & 0.8
& 13 & 13 & 1.396 & 1.292 & 1.080
& $4.57\times10^{-16}$ & $4.11\times10^{-16}$ \\
3 & 3 & 6 & 0.6 & 1.0
& 20 & 20 & 7.777 & 7.182 & 1.083
& $6.30\times10^{-16}$ & $5.71\times10^{-16}$ \\
3 & 4 & 7 & 0.6 & 1.0
& 23 & 23 & 36.991 & 31.086 & 1.190
& $5.76\times10^{-16}$ & $4.88\times10^{-16}$ \\
4 & 2 & 6 & 1.0 & 0.4
& 19 & 19 & 6.335 & 5.182 & 1.223
& $5.44\times10^{-16}$ & $5.14\times10^{-16}$ \\
4 & 3 & 7 & 0.6 & 0.6
& 45 & 45 & 33.330 & 30.704 & 1.086
& $5.79\times10^{-16}$ & $4.99\times10^{-16}$ \\
\hline
\end{tabular}
}
\end{table}

From Table~\ref{tab4}, both formulations compute the same number of distinct real $M$-eigenpairs for all tested dimension pairs. The mean residuals remain at the level of $10^{-16}$, which confirms the accuracy of the computed $M$-eigenpairs. As expected, the computational cost increases with the tensor dimension due to the increasing complexity of the associated polynomial eigenvalue problem. For the tested instances, the selected $\mathbf B_{\alpha,\beta}$ formulations achieve ratio factors larger than one, indicating lower average CPU times compared with the standard $Z$ formulation.

To better illustrate the CPU comparison, the ratio in Table~\ref{tab4} are plotted in Figure~\ref{fig1}. The dashed line indicates the unit ratio.

\begin{figure}[h]
        \centering
       \includegraphics[width=0.6\textwidth]{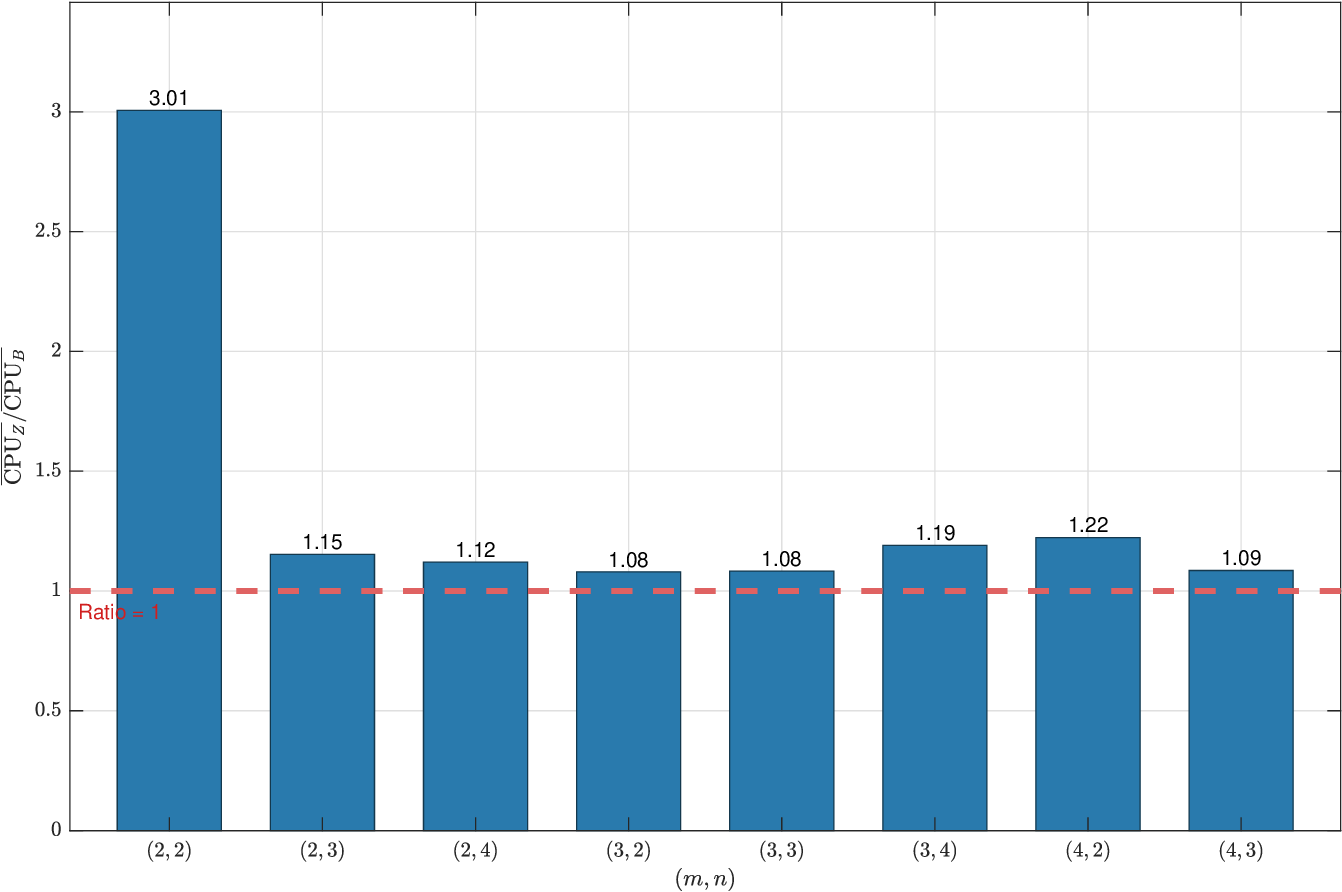}
        \caption{CPU time ratio between the standard $Z$ formulation and the
$\mathbf B_{\alpha,\beta}$ formulation in Example 4.4.}
        \label{fig1}
\end{figure}

As shown in Figure~\ref{fig1}, all CPU ratios are above the unit line. This indicates that the $B_{\alpha,\beta}$ formulation with the selected parameters is faster than the standard $Z$ formulation for the tested random instances. Together with the residuals reported in
Table~\ref{tab4}, these results indicate that the parameterized formulation can reduce computational time without loss of numerical accuracy.

\begin{example}\label{ex5}
To illustrate the perturbation bounds established in Theorem \ref{th3.1}, we use the tensor in Example \ref{ex2} as the unperturbed tensor $\mathcal A$. Let
$\mathcal E=\varepsilon\mathcal E_0,\widetilde{\mathcal A}=\mathcal A+\mathcal E$,
where $\mathcal E_0$ is a hierarchically symmetric tensor normalized such that $\|\mathcal{E}_0\|_F = 1$. The perturbation level is chosen from
\[
\varepsilon\in\{1,10^{-1},10^{-2},10^{-3},10^{-4},10^{-5}\}.
\]
For each $\varepsilon$, we compute the perturbed largest
$M$-eigenvalue $\lambda_{M_{\max}}(\widetilde{\mathcal A})$ and the corresponding lower and upper bounds
$$
L_M=\lambda_{M\max}(\mathcal A)+\lambda_{M\min}(\mathcal E),
\qquad
U_M=\lambda_{M\max}(\mathcal A)+\lambda_{M\max}(\mathcal E).
$$
We further report
$$
\mathrm{Err}=\left|\lambda_{M\max}(\widetilde{\mathcal A})
-\lambda_{M\max}(\mathcal A)\right|,
\qquad
\mathrm{Width}=U_M-L_M.
$$
The numerical results are presented in Table~\ref{tab5}.
\end{example}

\begin{table}[htbp]
  \centering
  \caption{Perturbation bounds for $\lambda_{M\max}(\widetilde{\mathcal A})$ under different perturbation levels.}
  \label{tab5}
  \begin{tabular}{cccccc}
    \toprule
    $\mathrm \varepsilon$ & $\mathrm L_M$ & $\mathrm\lambda_{M\max}(\widetilde{\mathcal A})$ & $\mathrm U_M$ &$\mathrm Err$ & $\mathrm Width$ \\
    \midrule
    $1$        & $317.70270019$ & $318.27823143$ & $318.66275604$ &$0.19921796$ &$0.96005585$ \\
    $10^{-1}$  & $318.04138214$ & $318.09888719$ & $318.13738773$ &$0.01987371$ &$0.09600558$\\
    $10^{-2}$  & $318.07525034$ & $318.08100036$ & $318.08485090$ &$0.00198689$ &$0.00960056$ \\
    $10^{-3}$  & $318.07863716$ & $318.07921216$ & $318.07959721$ &$0.00019868$ &$0.00096006$ \\
    $10^{-4}$  & $318.07897584$ & $318.07903333$ & $318.07907185$ &$0.00001987$ &$0.00009601$\\
    $10^{-5}$  & $318.07900971$ & $318.07901546$ & $318.07901931$ &$0.00000199$ &$0.00000960$\\
    \bottomrule
  \end{tabular}
\end{table}

As shown in Table~\ref{tab5}, the computed value $\lambda_{M\max}(\widetilde{\mathcal A})$ always lies in the interval $[L_M,U_M]$, which confirms the perturbation bound in Theorem~\ref{th3.1}.
Moreover, as $\varepsilon$ decreases, both bounds become tighter and the interval
width $U_M-L_M$ decreases rapidly, indicating that the estimate becomes
increasingly accurate for smaller perturbations.

\section{Conclusions}

In this paper, we developed a parameterized generalized tensor eigenvalue reformulation for computing all real $M$-eigenpairs of fourth order partially symmetric tensors. By constructing an associated symmetric lifted tensor and introducing the weighted normalization matrix $\mathbf B_{\alpha,\beta}$, the $M$-eigenvalue problem was reformulated as a $\mathbf B_{\alpha,\beta}$ eigenvalue problem. An exact eigenpair correspondence and explicit recovery formulas were
established, and the standard $Z$-eigenvalue formulation was recovered as a special case. We further derived perturbation bounds for the largest $M$-eigenvalue and showed that the lifting introduces no additional relaxation in these bounds. Numerical results were
reported to demonstrate the effectiveness and flexibility of the proposed framework.

\medskip



\begin{thebibliography}{99}

\bibitem{AG91} Auchmuty, G.: Globally and rapidly convergent algorithms for symmetric eigenproblems. SIAM Journal on Matrix Analysis and Applications. 12(4): 690-706 (1991)
\bibitem{B1970} Backus, G.: A geometrical picture of anisotropic elastic tensors. Reviews of Geophysics. 8: 633-671 (1970)
\bibitem{CCQ16} Chang, J., Chen, Y., Qi, L.: Computing eigenvalues of large scale sparse tensors arising from a hypergraph. SIAM Journal on Scientific Computing. 38(6): A3618-A3643 (2016)
\bibitem{CP09} Chang, K., Pearson, K., Zhang, T.: On eigenvalue problems of real symmetric tensors. Journal of Mathematical Analysis and Applications. 350: 416-422 (2009)
\bibitem{CCW19} Che, H., Chen, H., Wang, Y.: M-positive semi-definiteness and M-positive definiteness of fourth-order partially symmetric Cauchy tensors. Journal of Inequalities and Applications. 32 (2019)
\bibitem{CCW20} Che, H., Chen, H., Wang, Y.: On the M-eigenvalue estimation of fourth-order partially symmetric tensors. Journal of Industrial and Management Optimization. 16(1): 309-324 (2020)
\bibitem{CCX20} Che, H., Chen, H., Xu, N., Zhu, Q.: New lower bounds for the minimum M-eigenvalue of elasticity M-tensors and applications. Journal of Inequalities and Applications. 2020, 188: (2020)
\bibitem{CCZ21} Che, H., Chen, H., Zhou, G.: New M-eigenvalue intervals and application to the strong ellipticity of fourth-order partially symmetric tensors. Journal of Industrial and Management Optimization. 17(6): 3685-3694 (2021)
\bibitem{CH22} Chen, H., He, H., Wang, Y., Zhou, G.: An efficient alternating minimization method for fourth degree polynomial optimization. Journal of Global Optimization. 82(1): 83-103 (2022)
\bibitem{CHZ16} Chen, L., Han, L., Zhou, L.: Computing tensor eigenvalues via homotopy methods. SIAM Journal on Matrix Analysis and Applications. 37(1): 290-319 (2016)
\bibitem{CH22} Cui, L., Hu, Q., Chen, Y., Song, Y.: A Rayleigh quotient-gradient neural network method for computing Z-eigenpairs of general tensors. Numerical Linear Algebra with Applications. 29(3): e2420 (2022)
\bibitem{DS26} Du, Z., Song, Y.: An Efficient Memory Gradient Method for Extreme M-Eigenvalues of Elastic type Tensors. Computational and Applied Mathematics. 45: 430 https://doi.org/10.1007/s40314-026-03807-0 (2026)
\bibitem{DW24} Du, Z., Wang, C., Chen, H., Yan, H.: An Efficient GIPM Algorithm for Computing the Smallest V-Singular Value of the Partially Symmetric Tensor. Journal of Optimization Theory and Applications. 201(3): 1151-1167 (2024)
\bibitem{HLW20} He, J., Li, C., Wei, Y.: M-eigenvalue intervals and checkable sufficient conditions for the strong ellipticity. Applied Mathematics Letters. 102: 106137 (2020)
\bibitem{HLX21} He, J., Liu, Y., Xu, G.: New S-type inclusion theorems for the M-eigenvalues of a 4th-order partially symmetric tensor with applications. Applied Mathematics and Computation. 398: 125992 (2021)
\bibitem{HL21} He, J., Liu, Y., Xu, G.: New M-eigenvalue inclusion sets for fourth-order partially symmetric tensors with applications. Bulletin of the Malaysian Mathematical Sciences Society. 44(6): 3929-3947 (2021)
\bibitem{HXL20} He, J., Xu, G., Liu, Y.: Some inequalities for the minimum M-eigenvalue of elasticity M-tensors. Journal of Industrial and Management Optimization. 16(6): 3035-3045 (2020)
\bibitem{KB09} Kolda, T., Bader, B.: Tensor decompositions and applications. SIAM review, 51(3): 455-500 (2009)
\bibitem{LCL22} Li,S., Chen,Z., Liu,Q., Lu,L.: Bounds of M-eigenvalues and strong ellipticity conditions for elasticity tensors. Linear Multilinear Algebra. 70(19): 4544-4557 (2022)
\bibitem{LLL19} Li, S., Li, C., Li, Y.: M-eigenvalue inclusion intervals for a fourth order partially symmetric tensor. Journal of Computational and Applied Mathematics. 356: 391-401 (2019)
\bibitem{LL21} Li, S., Li, Y.: Programmable sufficient conditions for the strong ellipticity of partially symmetric tensors. Applied Mathematics and Computation. 403: 126134 (2021)
\bibitem{Lim05} Lim, L.: Singular values and eigenvalues of tensors: a variational approach[C]//1st IEEE International Workshop on Computational Advances in Multi-Sensor Adaptive Processing. IEEE. 2005: 129-132 (2005)
\bibitem{LZ16} Liu, L., Zhou, G., Caccetta, L.: Fixed point methods for computing a Z-eigenpair of general square tensors. Pacific Journal of Optimization. 12(2): 367-378 (2016)
\bibitem{MW20} Mo, C., Wang, X., Wei, Y.: Time-varying generalized tensor eigenanalysis via Zhang neural networks. Neurocomputing. 407: 465-479 (2020)
\bibitem{PA24} Pakmanesh, M., Afshin, H., Hajarian, M.: Normalized Newton method to solve generalized tensor eigenvalue problems. Numerical Linear Algebra with Applications. 31(4): e2547 (2024)
\bibitem{Qi05} Qi L. Eigenvalues of a real supersymmetric tensor. Journal of symbolic computation. 40(6): 1302-1324 (2005)
\bibitem{QC18} Qi, L., Chen, H., Chen, Y.: Tensor Eigenvalues and Their Applications. Springer, Singapore (2018)
\bibitem{QDH09} Qi, L., Dai, H.  Han, D.: Conditions for strong ellipticity and M-eigenvalues. Frontiers of Mathematics in China. 4: 349-364 (2009)
\bibitem{WCW23} Wang, C., Chen, H., Wang, Y.: An alternating shifted inverse power method for the extremal eigenvalues of fourth order partially symmetric tensors. Applied Mathematics Letters. 141: 108601 (2023)
\bibitem{WQZ09} Wang, Y., Qi, L., Zhang, X.: A practical method for computing the largest M-eigenvalue of a fourth order partially symmetric tensor. Numerical Linear Algebra with Applications. 16(7): 589-601 (2009)
\bibitem{Zhao23} Zhao, J.: Conditions of strong ellipticity and calculations of M-eigenvalues for a partially symmetric tensor. Applied Mathematics and Computation. 458: 128245 (2023)
\bibitem{ZL24} Zhao, J., Liu, P., Sang, C.: Shifted inverse power method for computing the smallest M-eigenvalue of a fourth-order partially symmetric tensor. Journal of Optimization Theory and Applications. 200(3): 1131-1159 (2024)

\end{thebibliography}
\end{document}